\def\int{\displaystyle\!int}
\def\lim{\displaystyle\!lim}
\def\sum{\displaystyle\!sum}
\def\sup{\displaystyle\!sup}
\def\inf{\displaystyle\!inf}
\def\cap{\displaystyle\!cap}
\def\max{\displaystyle\!max}
\def\min{\displaystyle\!min}
\def\frac{\displaystyle\!frac}
\newtheorem{theorem}{\bf Theorem}[section]
\newtheorem{lemma}{\bf Lemma}[section]
\newtheorem{proposition}{\bf Proposition}[section]
\def\pd#1#2{\frac{\partial#1}{\partial#2}}
\begin{document}

\title[Wavefronts for a nonlinear nonlocal bistable equation]{
Wavefronts for a nonlinear nonlocal bistable reaction-diffusion equation in population dynamics}
\thanks{Jing Li is supported by the Beijing Natural Science Foundation(Grant no. 9152013) and the
Research Grant Funds of Minzu University of China. This work is partially supported by DFG Project CH 955/3-1 and the DAAD project ``DAAD-PPP VR China''(Project-ID: 57215936).}

\author{Jing Li}
\address{College of Science, Minzu University of China, Beijing, 100081, P.R. China}
\email{{\tt matlj@163.com}}

\author{Evangelos Latos}\thanks{Corresponding author: Evangelos Latos}
\address{Lehrstuhl f\"ur Mathematik IV, Universit\"at Mannheim, 68131, Germany}
\email{evangelos.latos@math.uni-mannheim.de}

\author{Li Chen}
\address{Lehrstuhl f\"ur Mathematik IV, Universit\"at Mannheim, 68131, Germany}
\email{chen@math.uni-mannheim.de}

\date{}

\maketitle

\begin{abstract}
The wavefronts of a nonlinear nonlocal bistable reaction-diffusion equation,
\begin{align*}
\pd ut=\frac{\partial^2u}{\partial x^2}+u^2(1-J_\sigma*u)-du,\quad(t,x)\in(0,\infty)\times\mathbb R,
\end{align*}
with 
$
J_\sigma(x)=(1/\sigma)= J(x/\sigma)
$
and
$
\int_{\mathbb R} J(x)dx=1
$
are investigated in this article.
It is proven that there exists a $c_*(\sigma)$ such that for all $c\geq c_*(\sigma)$, a monotone
wavefront $(c,\omega)$ can be connected by the two positive equilibrium points.
On the other hand, there exists a $c^*(\sigma)$ such that the model
admits a semi-wavefront $(c^*(\sigma),\omega)$ with $\omega(-\infty)=0$. Furthermore, it is shown that
for sufficiently small $\sigma$,
 the semi-wavefronts are in fact wavefronts connecting $0$ to the largest equilibrium. In addition,
the wavefronts converge to those of the local problem as $\sigma\to0$.
\end{abstract}

\vskip5mm

{\small
{Mathematical Subject Classifications}: 35K65, 35K40.

{Keywords}: Wavefronts; Nonlocal; Bistable; Reaction-diffusion equation.
}
\vskip5mm

\section{Introduction}
In this work we study the nonlinear nonlocal reaction-diffusion equation
\begin{align}
\pd ut=\frac{\partial^2u}{\partial x^2}+u^2(1-J_\sigma*u)-du\quad\hbox{in}\;(0,\infty)\times\mathbb R,
\label{1.1}
\end{align}
where $0\leq d<\frac29$, $J_\sigma(x)=\frac1\sigma J(\frac{x}{\sigma})$ is a $\sigma$-parameterized nonnegative
kernel with
$$
J\in L^1(\mathbb R),\quad\int_{\mathbb R} J(x)dx=1
$$
and
$$J_\sigma*u(x)=\int_{\mathbb R}J_\sigma(x-y)u(y)dy. $$
This equation has three constant solutions,
$$0, \quad a=\frac12(1-\sqrt{1-4d}),\quad A=\frac12(1+\sqrt{1-4d}).$$

The problem arises in population dynamics with nonlocal consumption of resources, for example in \cite{4,1}.
It is used to model the behavior of  various biological phenomena such as emergence and evolution of biological
species and the process of speciation. Actually, similar nonlocal structure in the reaction term
appears also in describing the behavior of cancer cells with therapy as well as polychemotherapy and
chemotherapy \cite{20,21}.

The reaction term $u^2(1-J_\sigma*u)-du$ consists of the reproduction which
is proportional to the square of the density, the available resources and the mortality.
The nonlocal consumption of the resources $J_\sigma*u(x)$ describes that the consumption at the space point $x$
is determined by the individuals located in some area around this point, where $J_\sigma$ represents the
probability density function that describes the distribution of individuals.

For $J(x)=1$, with a general nonlinearity, $u^\alpha(1-\int u(x,t)dx)$ in the multi-dimensional case,
the problem has been studied \cite{BCL,BC} in terms of the existence of the classical solutions both in bounded
and unbounded domains correspondingly. In \cite{CLP}, it is shown that the blow-up of the solution could happen
for some $\alpha>1$. However, whether the solution exists is still not known in one dimension when $\alpha=2$ .

In the case of $J(x)=\delta(x)$, where $\delta(x)$ is the Dirac function, equation \eqref{1.1} becomes the
so called Huxley equation, which is a classical reaction-diffusion equation.
It has the same constant solutions, $0$, $a$ and $A$ to the nonlocal problem.
The existence of traveling waves has been studied extensively in the literature
(see \cite{9,10,11,12,13,14} among others).  It's proved that there exists a minimum speed such that
the traveling waves
connecting $a$ and $A$ exist for all values of the speed greater than or equal to this minimum speed. While
the traveling waves connecting $0$ and $A$ exist only for a single value of the speed.

Compared to the rich results for the local version of the Fisher-KPP reaction diffusion equation,
very limited theoretical results exist for its nonlocal version. In the last few years,
there has been several works on wavefronts for some typical nonlocal reaction
diffusion equations. In the research of wavefronts,
in order to get a priori bounds for the existence and monotonicity properties of the fronts,
the classical methods  substantially depend on the application of comparison
principle. However, for the equation with nonlocal competition term, the most challenging point
arises from the lack of the comparison principle.
One first example is the following nonlocal
Fisher-KPP equation
\begin{align}
\pd ut=\frac{\partial^2u}{\partial x^2}+u(1-J_\sigma*u)\quad\hbox{in}\;(0,\infty)\times\mathbb R.
\label{1.7}
\end{align}
Berestycki {\it et al} \cite{4} proved that \eqref{1.7} admits a semi-wavefront connecting $0$ to an
unknown positive state for all $c\geq c^*=2$ and there is no such kind of wavefront with wave speed $c<2$.
In \cite{5}, Nadin {\it et al} numerically verified the existence of monotone wavefronts.
After that, Alfaro {\it et al} \cite{7} rigorously proved that \eqref{1.7} admits the rapid wavefront
connecting $0$ and $1$. Furthermore, Fang {\it et al} \cite{6} gave a sufficient and necessary condition for the existence of
monotone wavefronts of \eqref{1.7} that connect the two equilibrium points $0$ and $1$. In a recent paper by Hasik {\it et al}
\cite{15}, for nonsymmetric interaction kernel $J_\sigma$, the different roles of the right and the left
interactions are investigated.
Nonlocal equations with bistable reactions have been investigated in \cite{17,8,3}. In \cite{17},
 Wang {\it et al}
studied
\begin{align}
\pd ut=\frac{\partial^2u}{\partial x^2}+g(u,J*S(u)),
\end{align}
where $g(u,J*S(u))$ satisfies some bistable assumptions.
Although it is a nonlocal problem, due to their special assumptions, the comparison principle still holds.
Therefore,
by constructing various pairs of super- and sub-solutions, employing the comparison
principle and the squeezing technique, the authors proved the existence of monotone traveling wavefronts.

There are further results on equations with other bistable reactions, where comparison principle can not be applied.
In \cite{8}, Alfaro {\it et al.} considered the following equation
\begin{align}
\pd ut=\frac{\partial^2u}{\partial x^2}+u(u-\theta)(1-J_\sigma*u)\quad\hbox{in}\;(0,\infty)\times\mathbb R
\label{14}
\end{align}
with $0<\theta<1$. The Leray-Schauder degree method is used to indicate that
\eqref{14} admits semi-wavefronts connecting $0$ to an unknown positive steady state, which is
above and away from the intermediate equilibrium. For focusing kernel, it is proved that the wave connects
$0$ and $1$.

The wavefront solution $\omega(x-ct)$ for Equation \eqref{1.1} has been investigated, for small $\sigma$,
in \cite{3} by Apreutesei {\it et al.}.  It satisfies
\begin  {align}
\omega''(\xi)-c\omega'(\xi)+\omega^2(\xi)(1-J_\sigma*\omega(\xi))-d\omega(\xi)=0.\label{15}
\end{align}
They proved the existence of wavefronts of \eqref{1.1} that connect $0$ and $A$.
In fact, for small $\sigma$, the nonlocal
operator is a perturbation of the corresponding local operator, thus the implicit function theorem
can be applied.
More precisely, under the assumptions
$$\int_\mathbb R|z|J(z)dz<\infty,\quad\int_\mathbb R|z|^2J(z)dz<\infty,$$
they obtained that there exists $\sigma_0>0$ such that, for any $|\sigma|<\sigma_0$, equation \eqref{1.2} has a solution
$(c, \omega)\in C^{2+\alpha}(\mathbb R)\times\mathbb R$ with $\omega(-\infty)=0$ and
$\omega(+\infty)=A$. Furthermore, the solution is of the class $C^1$
with respect to $\sigma$.

In this paper, we study the existence of wavefronts of \eqref{1.1} which connect $a$ to $A$ and $0$ to $A$
respectively by using a totally different method from \cite{3}. The main results we obtained in this paper are as follows.

The first result shows the existence of wavefronts connecting $a$ to $A$ for any $\sigma$ with big enough wave
speed $c$.
\begin{theorem} Suppose $0\leq d <\frac{2}{9}$, then it holds that
\begin{enumerate}
\item[(i)] for any $\sigma>0$, there exists a $c_*(\sigma)>0$ such that when
$c\geq $ $\max$ $\{2\sqrt{2A-d}$, $c_*(\sigma)\}$, \eqref{1.1} admits a monotone
wavefront $\omega \in C^2(\mathbb R)$, i.e., $(c, \omega)$ is the solution of the following problem
\begin{align}
& \omega''-c\omega'+\omega^2(1-J_\sigma*\omega)-d\omega=0 \qquad
	 \hbox{in}\; \mathbb R,\nonumber \\
	 & \omega(-\infty)=a,\quad\omega(+\infty)=A.\label{1.2}
\end{align}
\item[(ii)] as $\sigma\to0$, $c_*(\sigma)$ converges to
$c_*$. Moreover, for any $c\geq \max\{2\sqrt{2A-d},c_*\}$, by fixing $\omega(0)=\frac12$,
$\omega$ has a subsequence converging to $\omega_0$ in $C^{1,\alpha}_{loc}(\mathbb R)$,
 where $(\omega_0,c)$ is the solution of the following problem
\begin{align}
	 & \omega_0''-c\omega_0'+\omega_0^2(1-\omega_0)-d\omega_0=0 \qquad
	 \hbox{in}\; \mathbb R, \nonumber\\
	 & \omega_0(-\infty)=a,\quad \omega_0(0)=\frac12,\quad  \omega_0(+\infty)=A.
	 \label{21}
\end{align}
\end{enumerate}
\end{theorem}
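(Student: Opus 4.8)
The plan is to obtain the profile $(c,\omega)$ as a limit of solutions of \eqref{1.2} on bounded intervals, kept under control by barrier functions, rather than through a monotone iteration: as stressed in the Introduction, the competitive nonlocal term makes $\omega\mapsto\omega^2(1-J_\sigma*\omega)-d\omega$ non-monotone, so no comparison principle is available for the full equation. The structural device that partly replaces it is the set of algebraic relations satisfied by $a$ and $A$, namely $a+A=1$ (so $1-A=a$ and $1-a=A$), $aA=d$ and $A^2=A-d$. Indeed, whenever $\omega$ takes values in $[a,A]$ one has $a\le J_\sigma*\omega\le A$, hence $a\le 1-J_\sigma*\omega\le A$, and therefore the nonlocal reaction is squeezed between two local autonomous reactions, $a\,\omega^2-d\omega\le\omega^2(1-J_\sigma*\omega)-d\omega\le A\,\omega^2-d\omega$ on $[a,A]$. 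On this range the derivative in $\omega$ of the reaction, $2\omega(1-J_\sigma*\omega)-d$, is at most $2A-d$; this is exactly why the hypothesis $c\ge 2\sqrt{2A-d}$ enters, since it makes $\lambda^2-c\lambda+(2A-d)=0$ have two positive real roots, so that exponential functions are available as global comparison functions despite the lack of monotonicity.

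First I would build an ordered pair of barriers with values in $[a,A]$: the constant $A$, which is an exact supersolution because the reaction vanishes there, and a lower barrier that rises from $a$ at $-\infty$, modelled on the unstable direction of the linearization of \eqref{1.2} at $a$, which (after freezing the convolution) is governed by $v''-cv'+(2d-a)v=0$. This lower barrier is admissible once $c$ exceeds the associated, nonlocally corrected threshold $c_*(\sigma)$, and $c_*(\sigma)\to c_*$ as $\sigma\to0$ because $J_\sigma*\phi\to\phi$. On a truncated interval $[-n,n]$, with boundary data read from the barriers, I would solve \eqref{1.2} by inverting $\frac{d^2}{d\xi^2}-c\frac{d}{d\xi}-\beta$ (with $\beta>0$ large, so its Green's function has a fixed sign) and applying the Leray--Schauder degree to the resulting compact integral operator; this yields a solution on $[-n,n]$ without a comparison principle, once a priori bounds are secured.

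The main obstacle is precisely the $n$-independent bound $a\le\omega\le A$ together with the monotonicity of $\omega$, both obstructed by the nonlocality: at an interior maximum the pointwise relation coming from \eqref{1.2} does not close to give $\omega\le A$, because $J_\sigma*\omega$ samples values away from that point. I expect to handle this by comparing $\omega$ with solutions of the two local problems furnished by the squeezing inequalities above (for which the maximum principle does hold) and a contradiction/translation argument excluding overshoot of $A$, the global exponential barriers of the first paragraph providing the needed control; monotonicity should then follow from a sliding argument using positivity of the reaction on $(a,A)$. Granting these bounds, interior Schauder estimates and Arzela--Ascoli produce a subsequence converging in $C^2_{loc}(\mathbb R)$ to an entire solution of \eqref{1.2} with values in $[a,A]$; monotonicity forces the limits $\omega(\pm\infty)$ to exist and to be equilibria, and nontriviality of the profile then forces $\omega(-\infty)=a$, $\omega(+\infty)=A$, proving (i).

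For part (ii) the same estimates are uniform in $\sigma$ once $c\ge\max\{2\sqrt{2A-d},c_*\}$, so normalizing by $\omega(0)=\tfrac12$ removes the translation invariance and prevents the profile from drifting off to $\pm\infty$; Arzela--Ascoli then gives a subsequence $\omega\to\omega_0$ in $C^{1,\alpha}_{loc}(\mathbb R)$. Since $J_\sigma$ is an approximate identity and the profiles are uniformly $C^1$, $J_\sigma*\omega\to\omega_0$ locally uniformly, so $\omega_0$ solves the local problem \eqref{21} with the same normalization and end states $a$ and $A$; the convergence $c_*(\sigma)\to c_*$ follows from continuity in $\sigma$ of the threshold condition defining $c_*(\sigma)$.
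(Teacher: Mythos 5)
Your plan is built on the premise that ``no comparison principle is available'' for \eqref{1.2}, and you therefore route the whole construction through truncation, Leray--Schauder degree and a squeezing between local problems. But for this particular nonlinearity the premise is wrong, and the structure you discard is exactly what the paper's proof runs on: writing the equation as $\omega''-c\omega'+(2A-d)\omega=F(\omega)$ with $F(\omega)=2A\omega-\omega^2(1-J_\sigma*\omega)=2A\omega-\omega^2+\omega^2\,J_\sigma*\omega$, the convolution enters with a \emph{plus} sign, so $F$ is order-preserving on $0\le\omega\le A$ (Lemma 2.1). The paper then inverts $\frac{d^2}{d\xi^2}-c\frac{d}{d\xi}+(2A-d)$ (two positive real roots precisely when $c>2\sqrt{2A-d}$), obtains a monotone integral operator $T$, constructs an explicit piecewise-exponential sub-solution tending to $d$ at $-\infty$ and to $A$ at $+\infty$ and a super-solution pinned at $\mu_b\in(a,A)$, and iterates $\overline\omega_m=T[\overline\omega_{m-1}]$ downward to a monotone fixed point; the ordering $\underline\omega\le\omega\le\overline\omega$ is what identifies the limits as $a$ and $A$. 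Note also that $c_*(\sigma)$ is \emph{not} the threshold from the linearization at $a$: it is defined by the existence of negative roots of $\Phi_i(c,\sigma,\lambda)=\lambda^2-c\lambda\mp d-A^2\int J_\sigma(s)e^{-\lambda s}ds$, i.e.\ by the admissible decay rates toward $A$ at $+\infty$, which is where the sub- and super-solutions must be matched.

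The substitute arguments you offer in place of this structure have concrete gaps. First, the squeezing $a\omega^2-d\omega\le\omega^2(1-J_\sigma*\omega)-d\omega\le A\omega^2-d\omega$ is only valid \emph{after} you know $\omega$ (hence $J_\sigma*\omega$) lies in $[a,A]$, so using it to exclude overshoot of $A$ on $[-n,n]$ is circular; you would also have to specify how $\tilde\omega$ is extended outside $[-n,n]$ before $J_\sigma*\omega$ even makes sense near the endpoints. Second, your engine for monotonicity --- ``positivity of the reaction on $(a,A)$'' --- is false in the nonlocal setting: if $J_\sigma*\omega(\xi)$ is close to $A$ while $a<\omega(\xi)<A$, then $\omega^2(1-J_\sigma*\omega)-d\omega\le\omega(a\omega-d)=a\omega(\omega-A)<0$; and the sliding method you invoke requires precisely the comparison principle you declared unavailable (it does work here, but only via the monotonicity of $F$ that you did not identify). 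Third, ``nontriviality of the profile forces $\omega(-\infty)=a$, $\omega(+\infty)=A$'' is asserted rather than proved; in the paper this step is carried by the sub-solution (whose limits are $d>0$ at $-\infty$ and $A$ at $+\infty$) and the super-solution (bounded by $\mu_b<A$ at $-\infty$), which trap $\alpha_0\in[d,\mu_b]$ and $\beta_0=A$ and leave $\alpha_0=a$ as the only admissible equilibrium. Until these three points are repaired, part (i) is not established, and part (ii) --- which is otherwise in line with the paper's compactness argument --- has nothing to pass to the limit.
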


The second result demonstrates the existence of a semi-wavefronts connecting $0$ to an intermediate state $d_0$
for any $\sigma$; and furthermore this semi-wavefront can be extended to $A$ as $x$
goes to $+\infty$ in the case of small $\sigma$.
\begin{theorem}Suppose $0< d <\frac{2}{9}$, then it holds that
\begin{enumerate}
\item [(i)] there exists an $M>0$ such that for any $\sigma>0$ and $0<d_0<d$, \eqref{1.1} admits a semi-wavefront $(\omega, c^*(\sigma))$ with  $\max\{|c^*(\sigma)|,\|\omega\|_{C^2
(\mathbb R)}\}\leq M$, i.e. $\omega$ is the solution of the following problem
\begin{align}
	 & \omega''-c^*(\sigma)\omega'+\omega^2(1-J_\sigma*\omega)-d\omega=0 \qquad
	 \hbox{in}\; \mathbb R,\nonumber \\
	 & \omega(-\infty)=0 ,\quad \omega(0)=d_0,
\label{1.10}
\end{align}
and $0\leq\omega\leq A$ on $\mathbb R$, $\omega'\geq0$ in $(-\infty,0]$.

\item [(ii)] if furthermore $m_i=\int_\mathbb R|z|^iJ(z)dz<+\infty$ for $i=1,2$,
then there exists $\sigma^*>0$
such that for $\sigma<\sigma^*$, $c^*(\sigma)$ is positive and the semi-wavefronts are in fact
wavefronts with $\omega(+\infty)=A$.

\item [(iii)]  as $\sigma\to0$, $c^*(\sigma)$ converges to
$c^*$. Moreover, the solution $\omega$ has a subsequence converging to $\omega_0$ in $C^{1,\alpha}_{loc}(\mathbb R)$,
which satisfies
\begin{align}
	 & \omega_0''-c^*\omega_0'+\omega_0^2(1-\omega_0)-d\omega_0=0 \qquad
	 \hbox{in}\; \mathbb R\nonumber \\
	 & \omega_0(-\infty)=0,\quad \omega_0(0)=d_0,\quad \omega_0(+\infty)=A.
	 \label{24}
\end{align}
\end{enumerate}
\end{theorem}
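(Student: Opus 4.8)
The plan is to treat all three assertions through a single circle of ideas: a Leray--Schauder degree argument on truncated intervals produces the semi-wavefront of part (i), uniform a priori estimates (which stand in for the unavailable comparison principle) yield the bounds $\max\{|c^*(\sigma)|,\|\omega\|_{C^2(\mathbb R)}\}\le M$ and survive both limiting processes, and a perturbation analysis off the local profile \eqref{24} upgrades the semi-wavefront to a genuine wavefront for small $\sigma$. For part (i), I would pose the profile equation on a bounded interval $(-n,n)$, regard the pair $(\omega,c)$ as the unknown, and close the system with the normalization $\omega(0)=d_0$ together with the data $\omega(-n)=0$ and a Neumann condition at $\xi=n$. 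Inverting the linear part $\omega\mapsto\omega''-c\omega'$ recasts the truncated problem as a fixed point $\omega=T_n(\omega,c)$ with $T_n$ compact, and I would compute its degree by a homotopy interpolating the nonlocal reaction with its local counterpart---replacing $J_\sigma*\omega$ by $tJ_\sigma*\omega+(1-t)\omega$---landing at $t=0$ on the classical local bistable problem, for which the degree is readily computed to be nonzero. Homotopy invariance, hence solvability on $(-n,n)$, then hinges on a priori bounds that hold uniformly along the deformation.

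The technical heart of (i) is precisely these bounds, and it is here that the absence of a comparison principle is felt. I would not attempt a pointwise maximum principle, since $J_\sigma*\omega$ is not dominated by $\omega$ at a single point; instead the bounds $0\le\omega\le A$ follow from integral estimates combined with the sign structure of the reaction $f(u)=-u(u-a)(u-A)$, while the bound on the speed is extracted by integrating the profile equation against suitable test functions and using $\omega(0)=d_0$ and $\|J_\sigma\|_{L^1}=1$; standard interior elliptic estimates then upgrade these to the uniform $C^2$ bound. The one-sided monotonicity $\omega'\ge0$ on $(-\infty,0]$ I would read off from the linearization at the state $0$, whose characteristic equation $\lambda^2-c\lambda-d=0$ has one positive and one negative root and thus forces the monotone exponential approach $\omega\sim e^{\lambda_+\xi}$ as $\xi\to-\infty$; this monotonicity is then propagated forward up to $\xi=0$ by excluding interior critical points via the sign of $f$ on $(0,d_0)$. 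Sending $n\to\infty$ and applying Arzel\`a--Ascoli to the uniform $C^2$ bounds yields the semi-wavefront $(\omega,c^*(\sigma))$ satisfying \eqref{1.10}.

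Part (iii) is then immediate from the same uniform estimates: the bound $\max\{|c^*(\sigma)|,\|\omega\|_{C^2(\mathbb R)}\}\le M$ furnishes compactness, so along a subsequence $c^*(\sigma)\to c^*$ and $\omega\to\omega_0$ in $C^{1,\alpha}_{loc}(\mathbb R)$, and since $J_\sigma$ is an approximate identity one has $J_\sigma*\omega\to\omega_0$, which lets me pass to the limit and obtain the local profile equation in \eqref{24} with $\omega_0(-\infty)=0$ and $\omega_0(0)=d_0$ preserved.

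I expect the main obstacle to lie in part (ii), namely in upgrading the semi-wavefront, whose right-hand limit is a priori unknown, to a full wavefront with $\omega(+\infty)=A$. The positivity $c^*(\sigma)>0$ for small $\sigma$ I would obtain from the limiting energy balance $c^*\int(\omega_0')^2=\int_0^A[u^2(1-u)-du]\,du$, noting that the right-hand integral is positive precisely because $d<\frac29$ (it vanishes at $d=\frac29$), so that $c^*>0$, while the moment hypotheses $m_1,m_2<\infty$ render the nonlocal correction to this balance an $O(\sigma)$ perturbation. To identify the right-hand limit I would first show $\omega'(\xi)\to0$ and that $\omega(\xi)$ approaches the equilibrium set as $\xi\to+\infty$, and then use the $C^1_{loc}$ closeness (quantified through $m_1,m_2$) to the \emph{monotone} local front connecting $0$ and $A$ to rule out both the intermediate root $a$ and any oscillation, forcing $\omega(+\infty)=A$.
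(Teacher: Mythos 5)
Your overall architecture (degree theory on truncated intervals, uniform a priori bounds replacing the comparison principle, then two limit passages) matches the paper, and your treatment of part (iii) and of the monotonicity on $(-\infty,0]$ is essentially the paper's. But there are two genuine gaps.

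First, the bound $0\le\omega\le A$ does not follow from ``integral estimates combined with the sign structure of the reaction $-u(u-a)(u-A)$'', because the actual reaction is $u^2(1-J_\sigma*u)-du$ and at an interior maximum point $t_0$ with $\omega(t_0)>A$ the convolution $J_\sigma*\omega(t_0)$ can be far below $\omega(t_0)$, so the reaction need not have the sign that the local cubic would dictate; no contradiction arises. This is exactly why the paper inserts a cut-off $g_\varepsilon\in C_0^\infty(0,A)$ into the truncated problem: outside $[0,A]$ the equation degenerates to $\omega''-c\omega'=0$ and the \emph{linear} maximum principle forces $0\le\omega\le A$. Without some such device (or a worked-out substitute), your a priori $L^\infty$ bound, and hence the entire degree argument, is unsupported. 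Relatedly, for the passage $n\to\infty$ you need the lower bound on $c$ to be uniform in $n$ (the paper proves $c\ge -C\sigma$ for $\tau=1$ by a delicate argument near the level $\omega=\tfrac12$); a bound $c\ge -K(n)$ depending on the interval does not suffice.

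Second, your route to $c^*(\sigma)>0$ and $\omega(+\infty)=A$ in part (ii) is circular: the energy balance $c^*\int(\omega_0')^2=\int_0^A[u^2(1-u)-du]\,du$ presupposes that the limiting profile connects $0$ to $A$, which is precisely what is to be proved; if $\omega(+\infty)$ were $0$ or $a$ the right-hand side would be $0$ or $\int_0^a$, and the latter is negative. Likewise, $C^1_{loc}$ closeness to the monotone local front gives no information about the tail as $\xi\to+\infty$. The paper's key idea, which your proposal is missing, is a \emph{global} lower barrier: since $|\omega_L-J_\sigma*\omega_L|\le Cm_1\sigma$, the profile $\omega_L$ is a supersolution of the perturbed \emph{local} bistable equation $\psi''-c_L\psi'+f(\psi)-C_0\sigma=0$, for which the comparison principle is available; one solves this local problem on $[-L,L]$ with the \emph{known} boundary values $\alpha,\beta$ (the perturbed equilibria), gets $\psi_L\le\omega_L$, runs the energy identity on $\psi_L$ (legitimately, since its endpoint values are prescribed) to obtain $c^*(\sigma)\ge c_0>0$, and concludes $\omega(+\infty)\ge\psi(+\infty)>a$. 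Only then does an $L^2$ estimate on $\omega'$ (valid when $|c|>\sqrt{m_2}\,\sigma A^2$, i.e.\ for small $\sigma$) show that $\lim_{t\to+\infty}\omega$ exists and lies in $\{0,a,A\}$, forcing the value $A$. Without this barrier construction, part (ii) does not close.
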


Next we summarize the main methods used in this paper.
To study the existence of monotone traveling
wave, we use the classical method of sub- and super-solutions for an appropriate monotone operator,
which is motivated by \cite{22} on the time-delay Fisher-KPP
equation and \cite{6} on the nonlocal Fisher-KPP equation. In our case,
it's verified that the obtained monotone wavefronts connect the two positive states $a$ and $A$.
The proof of the existence of wavefronts connecting $0$ and $A$ is more delicate.
We start from a cut-off approximation, in a bounded domain $[-L,L]$, of the original problem and show that
the solutions are between $0$ and $A$.
Furthermore we can obtain the uniform $C^2$-bound of the solutions independent of $L$ and the scale of the cut-off.
By removing the cut-off and letting $L$ tend to infinity we derive the existence
of semi-wavefronts which connect $0$ to $d_0$. To show the semi-wavefronts are in fact wavefronts with
$\omega(+\infty)=A$, the main
difficulty is to exclude the case that $\omega(+\infty)=0$ and $\omega(+\infty)=a$.
Such a difficulty also arises in
the construction of bistable wavefronts in \cite{16,8}. Instead of using the energy methods as in \cite {16},
we adopt a rather direct method by comparing the semi-wavefronts that has been obtained from the nonlocal problem with those of the corresponding local problem.

The paper is organized as follows. In Section 2, by
monotone iteration method, we establish the existence of monotone wavefronts connecting the two
positive equilibrium $a$ and $A$. In Section 3, we prove the existence of semi-wavefronts by a
limiting process. Moreover, for $\sigma$ sufficiently small, we prove that the semi-wavefronts are wavefronts
connecting $0$ and $A$. Furthermore, as $\sigma\to0$, in both of Section 2 and Section 3, we prove that the
wavefronts converge to those of the corresponding local problems.

\section{Monotone wavefronts connecting $a$ and $A$}

To prove the existence of monotone wavefronts, we adopt the method of the sub- and super-solution.
The main task is to define a monotone operator and
to construct a pair of ordered lower and upper fixed points. To this end, we prove the following lemmata.
\begin{lemma}\label{lem2.1}
Denote
$$
F(\omega)(\xi)=2A\omega(\xi)-\omega^2(\xi)(1-J_\sigma*\omega(\xi)),
$$
then for any $0\leq\omega\leq A$, we have
\begin{enumerate}
\item[(i)] $F(\omega)(\xi)>0$;
\item[(ii)] $F(\omega_1)(\xi)\geq F(\omega_2)(\xi)\quad\hbox{if}\quad\omega_1(\xi)\geq\omega_2(\xi)$.
\end{enumerate}
\end{lemma}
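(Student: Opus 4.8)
The plan is to exploit the elementary bounds that follow from $0 \le \omega \le A$ together with the normalization $\int_{\mathbb R} J_\sigma\,dx = 1$ and $J_\sigma \ge 0$. The single structural observation driving both parts is to rewrite
$$F(\omega)(\xi) = \big(2A\omega(\xi) - \omega^2(\xi)\big) + \omega^2(\xi)\,(J_\sigma * \omega)(\xi),$$
which separates a purely local quadratic from a manifestly nonnegative nonlocal term. Throughout I use that, since $J_\sigma \ge 0$ has unit mass and $0 \le \omega \le A$, the convolution satisfies $0 \le (J_\sigma * \omega)(\xi) \le A$.

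For part (i), I would note from the bound above that $1 - (J_\sigma * \omega)(\xi) \le 1$, so $\omega^2\,(1 - J_\sigma * \omega) \le \omega^2$. Because $\omega \le A$ forces $\omega^2 \le A\omega$, this gives
$$F(\omega)(\xi) \ge 2A\omega(\xi) - A\omega(\xi) = A\omega(\xi) \ge 0,$$
with strict inequality at every $\xi$ where $\omega(\xi) > 0$, which is the relevant regime for the iteration.

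For part (ii), I would read the hypothesis as the ordering $\omega_1 \ge \omega_2$ of functions on all of $\mathbb R$ (the sense in which it is applied below), since the nonlocal term requires the global comparison and not merely the inequality at the single point $\xi$. The local part $s \mapsto 2As - s^2$ has derivative $2(A - s) \ge 0$ on $[0,A]$, hence is nondecreasing there, so $2A\omega_1(\xi) - \omega_1^2(\xi) \ge 2A\omega_2(\xi) - \omega_2^2(\xi)$. For the nonlocal part, the pointwise ordering $\omega_1 \ge \omega_2 \ge 0$ propagates through the positive kernel to give $J_\sigma * \omega_1 \ge J_\sigma * \omega_2 \ge 0$, and combined with $\omega_1^2(\xi) \ge \omega_2^2(\xi) \ge 0$ this yields $\omega_1^2\,(J_\sigma * \omega_1) \ge \omega_2^2\,(J_\sigma * \omega_2)$ at $\xi$. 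Adding the two inequalities delivers $F(\omega_1)(\xi) \ge F(\omega_2)(\xi)$.

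The computation is routine once the splitting is in place; the only point needing care — and the reason for the coefficient $2A$ — is the monotonicity of the local quadratic, which holds \emph{precisely} because the admissible range is capped at $A$. The constraint $d < \tfrac{2}{9}$ plays no role here beyond guaranteeing that $a$ and $A$ are well defined: the two claims consume only $0 \le \omega \le A$.
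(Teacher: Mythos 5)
Your proof is correct and follows essentially the same route as the paper: the same bound $\omega^2(1-J_\sigma*\omega)\le\omega^2\le A\omega$ for (i), and the same splitting into the increasing local quadratic $2As-s^2$ on $[0,A]$ plus the manifestly monotone nonlocal term $\omega^2\,J_\sigma*\omega$ for (ii). Your added remarks — that the inequality in (i) is strict only where $\omega(\xi)>0$, and that (ii) requires the global ordering $\omega_1\ge\omega_2$ on all of $\mathbb R$ — are both accurate refinements of points the paper leaves implicit.
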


\begin{proof}
\begin{enumerate}
\item[(i)]  It can be easily checked that
\begin{align*}
F(\omega)=&2A\omega-\omega^2(1-J_\sigma*\omega)
\\
=&\omega[2A-\omega(1-J_\sigma*\omega)]
\\
\geq&\omega(2A-A)>0
\end{align*}
\item[(ii)] Denote $g(\omega)=2A\omega-\omega^2$,
then $g'(\omega)=2A-2\omega\geq0$, which together with the monotonicity of $h(\omega)=\omega^2J_\sigma*\omega$
 with $\omega$ imply the monotonicity of $F(\omega)$ in $\omega$.
\end{enumerate}
\end{proof}

Note that if $\omega(\xi)$ satisfies \eqref{1.2}, then we have
$$\omega''(\xi)-c\omega'(\xi)+(2A-d)\omega(\xi)=F(\omega)(\xi).$$
Define
$$
L[\omega]=\omega''(\xi)-c\omega'(\xi)+(2A-d)\omega(\xi)-F(\omega)(\xi),
$$
then it is clear that finding a solution of \eqref{1.2} is equivalent to searching a function $\omega$ satisfying
$L[\omega]=0$, which is equivalent to
$$\omega(\xi)=\frac{1}{\mu_2-\mu_1}\int_\xi^{+\infty}\left(e^{\mu_1(\xi-y)}-e^{\mu_2(\xi-y)}\right)F(\omega(y))dy,$$
where $0<\mu_1\leq \mu_2$ are the two different real and positive roots of $\mu^2-c\mu+2A-d=0$ as
$c>2\sqrt{2A-d}$.

\begin{lemma}\label{lem2.2}
For $c>2\sqrt{2A-d}$, let
$$
T[\omega](\xi)=\frac{1}{\mu_2-\mu_1}\int_\xi^{+\infty}\left(e^{\mu_1(\xi-y)}-e^{\mu_2(\xi-y)}\right)
F(\omega(y))dy,
$$
then
\begin{enumerate}
\item[(i)] if $\overline{\omega}(\xi)$ is a super-solution of \eqref{1.2}, then $T[\overline{\omega}](\xi)\leq
\overline{\omega}(\xi)$ and $T[\overline{\omega}](\xi)$ is also a
super-solution. Moreover, for any sub-solution $\underline{\omega}(\xi)$ of \eqref{1.2} that satisfies
$\underline{\omega}(\xi)\leq\overline{\omega}(\xi)$, we have $\underline{\omega}(\xi)\leq
T[\overline{\omega}](\xi)$.
\item[(ii)] if $\omega(\xi)$ is increasing, then $T[\omega](\xi)$ is also increasing.
\end{enumerate}
\end{lemma}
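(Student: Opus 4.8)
The plan is to read $T$ as the inverse of the linear operator $\mathcal L_0 v:=v''-cv'+(2A-d)v$ and to exploit the positivity of its Green kernel. First I would record the two facts on which everything rests. Writing $K(\xi,y)=\frac{1}{\mu_2-\mu_1}\bigl(e^{\mu_1(\xi-y)}-e^{\mu_2(\xi-y)}\bigr)$ for $y>\xi$ and $K(\xi,y)=0$ for $y<\xi$, a direct differentiation (the boundary terms at $y=\xi$ produce exactly the jump $F(\omega(\xi))$, while the remaining integrands carry the factor $\mu_i^2-c\mu_i+(2A-d)=0$) shows $\mathcal L_0\,T[\omega]=F(\omega)$, so $T$ inverts $\mathcal L_0$ on bounded functions; and since $\mu_1<\mu_2$ for $c>2\sqrt{2A-d}$, one has $e^{\mu_1(\xi-y)}\ge e^{\mu_2(\xi-y)}$ whenever $y>\xi$, hence $K\ge0$. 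Because both homogeneous solutions $e^{\mu_1\xi},e^{\mu_2\xi}$ blow up at $+\infty$, the bounded solution of $\mathcal L_0 v=f$ is unique; since the kernel integrates to $\int K(\xi,y)\,dy=1/(2A-d)$, the map $f\mapsto\int K(\xi,y)f(y)\,dy$ produces a bounded solution, so every bounded $w$ obeys the representation $w(\xi)=\int K(\xi,y)\,(\mathcal L_0 w)(y)\,dy$. Combined with $K\ge0$ this yields the comparison principle I shall use repeatedly: if $w$ is bounded and $\mathcal L_0 w\ge0$, then $w\ge0$.

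For part (i) I would set $w=\overline\omega-T[\overline\omega]$. Since $\mathcal L_0 T[\overline\omega]=F(\overline\omega)$ and, by definition of a super-solution, $\mathcal L_0\overline\omega\ge F(\overline\omega)$, we get $\mathcal L_0 w=\mathcal L_0\overline\omega-F(\overline\omega)\ge0$, hence $w\ge0$, i.e. $T[\overline\omega]\le\overline\omega$. That $T[\overline\omega]$ is again a super-solution then follows from the monotonicity of $F$ (Lemma \ref{lem2.1}(ii)): from $T[\overline\omega]\le\overline\omega$ we obtain $F(T[\overline\omega])\le F(\overline\omega)=\mathcal L_0 T[\overline\omega]$, which is precisely the super-solution inequality for $T[\overline\omega]$. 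For the last assertion, given a sub-solution $\underline\omega\le\overline\omega$ I would compare directly: $\mathcal L_0(T[\overline\omega]-\underline\omega)=F(\overline\omega)-\mathcal L_0\underline\omega\ge F(\overline\omega)-F(\underline\omega)\ge0$, where the first inequality uses the sub-solution property $\mathcal L_0\underline\omega\le F(\underline\omega)$ and the second uses $\underline\omega\le\overline\omega$ together with Lemma \ref{lem2.1}(ii); the comparison principle then gives $\underline\omega\le T[\overline\omega]$.

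For part (ii) I would substitute $s=y-\xi$ to rewrite $T[\omega](\xi)=\int_0^\infty k(s)\,F(\omega)(\xi+s)\,ds$ with $k(s)=\frac{1}{\mu_2-\mu_1}(e^{-\mu_1 s}-e^{-\mu_2 s})\ge0$. The decisive structural remark is that $F$, and hence $T$, is translation equivariant: since convolution commutes with shifts, $F(\omega(\cdot+h))=F(\omega)(\cdot+h)$, and therefore $T[\omega](\cdot+h)=T[\omega(\cdot+h)]$. If $\omega$ is increasing then $\omega(\cdot+h)\ge\omega$ for every $h>0$, so by Lemma \ref{lem2.1}(ii) $F(\omega(\cdot+h))\ge F(\omega)$ pointwise, and integrating against the nonnegative kernel $k$ gives $T[\omega](\xi+h)=T[\omega(\cdot+h)](\xi)\ge T[\omega](\xi)$. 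As $h>0$ is arbitrary, $T[\omega]$ is increasing.

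The step I expect to be the main obstacle is the comparison principle in the first paragraph. The operator $\mathcal L_0$ has a \emph{positive} zeroth-order coefficient $2A-d$, so the classical maximum principle does not apply in its usual form; the correct direction of the implication $\mathcal L_0 w\ge0\Rightarrow w\ge0$ must be extracted from the sign of the Green kernel $K$ together with uniqueness of bounded solutions, and one has to check that $F(\omega)$ is bounded (which holds since $0\le\omega\le A$ confines $F(\omega)$ to a fixed range by Lemma \ref{lem2.1}) so that the representation $w=\int K\,\mathcal L_0 w$ is legitimate. Once this positivity and the translation equivariance of $F$ are in place, parts (i) and (ii) reduce to bookkeeping built on Lemma \ref{lem2.1}(ii).
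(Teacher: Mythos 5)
Your proposal is correct and follows essentially the same route as the paper: both prove (i) by applying the positive kernel $K$ to $\mathcal L_0\overline\omega - F(\overline\omega)\ge 0$ and then use the monotonicity of $F$ from Lemma \ref{lem2.1}(ii) to see that $T[\overline\omega]$ is again a super-solution, and both prove (ii) by combining translation invariance, monotonicity of $F$, and nonnegativity of the kernel. The only difference is cosmetic: you justify the representation $w=\int K\,(\mathcal L_0 w)$ via uniqueness of bounded solutions, a step the paper simply asserts.
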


\begin{proof}
\begin{enumerate}
\item[(i)] if $\overline{\omega}(\xi)$ is a super-solution of \eqref{1.2}, then
\begin{align}
L[\overline{\omega}]=\overline{\omega}''(\xi)-c\overline{\omega}'(\xi)+(2A-d)\overline{\omega}(\xi)-F(\overline{\omega})(\xi)
\geq0.\label{17}
\end{align}
 Let $\omega_1=T[\overline{\omega}]$, then
\begin{align}
\omega_1''(\xi)-c\omega_1'(\xi)+(2A-d)\omega_1(\xi)-F(\overline{\omega}(\xi))=0.\label{18}
\end{align}
Let $\varphi(\xi)=\overline{\omega}(\xi)-\omega_1(\xi)$, $r(\xi)=\varphi''(\xi)-c\varphi'(\xi)+(2A-d)\varphi(\xi)$,
then from
\eqref{17} and \eqref{18}, we obtain $r(\xi)\geq0$ and
$$\varphi(\xi)=\frac{1}{\mu_2-\mu_1}\int_\xi^{+\infty}\left(e^{\mu_1(\xi-y)}-e^{\mu_2(\xi-y)}\right)r(\xi)dy\geq0,$$
which means that $T[\overline{\omega}](\xi)\leq\overline{\omega}(\xi)$. Similarly, we can get $\underline{\omega}\leq
T[\overline{\omega}](\xi)$.

Furthermore, noticing $$\overline{\omega}\geq T[\overline{\omega}]=\omega_1,$$
from (ii) of Lemma 2.1 we derive
$$\omega_1''-c\omega_1'+(2A-d)\omega_1=F(\overline{\omega})\geq
F(\omega_1).$$
It follows that $L[\omega_1]\geq0$ and $\omega_1$ is also a super-solution.

\item[(ii)] If $\omega(\xi)$ is increasing, then from (ii) of Lemma 2.1 we obtain
$F(\omega)(\xi)$ is also increasing, therefore
$$
F(\omega)(\xi+t)-F(\omega)(\xi)\geq0, \quad \forall t>0.
$$
Furthermore, we have
\begin{align*}
&T[\omega](\xi+t)-T[\omega](\xi)
\\
=&\frac{1}{\mu_2-\mu_1}\int_\xi^{+\infty}\left(e^{\mu_1(\xi-y)}-e^{\mu_2(\xi-y)}\right)(F(\omega)(\xi+t)-
F(\omega)(\xi))dy\geq0,
\end{align*}
which implies that $T[\omega](\xi)$ is also increasing in $\xi$.
\end{enumerate}
\end{proof}

Define
$$\Phi_1(c,\sigma,\lambda):=\lambda^2-c\lambda-d-A^2\int_{\mathbb R}J_\sigma(s)e^{-\lambda s}ds=0$$
and
$$\Phi_2(c,\sigma,\lambda):=\lambda^2-c\lambda+d-A^2\int_{\mathbb R}J_\sigma(s)e^{-\lambda s}ds=0,$$
which, by a change of variable, are equivalent to
$$\frac{1}{\sigma^2}\lambda^2-\frac{c}{\sigma}\lambda-d-A^2\int_{\mathbb R}J(s)e^{-\lambda s}ds=0$$
and
$$\frac{1}{\sigma^2}\lambda^2-\frac{c}{\sigma}\lambda-d+A^2\int_{\mathbb R}J(s)e^{-\lambda s}ds=0$$
respectively. Similar to Proposition 2.1 in \cite{6}, we have the following result

\begin{proposition}\label{prop2.1}
For any $\sigma>0$, there exists $c_*(\sigma)\in(0,+\infty]$, which is increasing in $\sigma$, such that
if $c\geq c_*(\sigma)$, then $\Phi_i(c,\sigma,\lambda)=0$, $i=1,2$, admit the largest negative roots $\lambda_i$,
and there exists $\varepsilon_i=\varepsilon_i(c,\sigma)>0$ such that $\Phi_i(c,\sigma,\lambda_i-\varepsilon_i)>0$.
While if $c<c_*(\sigma)$, there exists $i\in\{1,2\}$ such that $\Phi_i(c,\sigma,\lambda)=0$ admits no negative root.
\end{proposition}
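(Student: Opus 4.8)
The plan is to fix $\sigma>0$ and reduce the two transcendental equations to a one–variable extremal problem, exactly in the spirit of Proposition~2.1 of \cite{6}. Writing $b_1=-d$, $b_2=+d$ and $K(\lambda):=\int_{\mathbb R}J_\sigma(s)e^{-\lambda s}\,ds$, so that $\Phi_i(c,\sigma,\lambda)=\lambda^2-c\lambda+b_i-A^2K(\lambda)$, the key structural fact is that each $\Phi_i$ is affine in $c$ with $\partial_c\Phi_i=-\lambda$, which is strictly positive on the region $\lambda<0$ where the roots are sought. Hence on $(-\infty,0)$ the relation $\Phi_i=0$ can be solved uniquely for the speed,
\[
c=g_i(\lambda):=\lambda+\frac{b_i-A^2K(\lambda)}{\lambda},\qquad\lambda<0,
\]
and $\lambda<0$ is a root of $\Phi_i(c,\sigma,\cdot)$ if and only if $g_i(\lambda)=c$. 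In this way the existence of negative roots for a given $c$ is equivalent to $c$ lying in the range of $g_i$, and the whole statement becomes a study of the two scalar functions $g_1,g_2$.

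First I would collect the analytic properties of the kernel term $K$: it is positive and, on the interval where it is finite, smooth and strictly convex with $K(0)=1$. The decisive input is the pair of boundary limits of $g_i$. As $\lambda\to 0^-$ the numerator $b_i-A^2K(\lambda)\to b_i-A^2<0$ while $\lambda\to 0^-$, so $g_i(\lambda)\to+\infty$; as $\lambda\to-\infty$ the exponential growth of $K$ makes $-A^2K(\lambda)/\lambda$ dominate the linear term, so again $g_i(\lambda)\to+\infty$. Being continuous on $(-\infty,0)$ and tending to $+\infty$ at both ends, $g_i$ attains a finite infimum, which I define to be $c_*^{(i)}(\sigma)$; when $K\equiv+\infty$ on $(-\infty,0)$ (a kernel too heavy on the right) no finite speed produces a real root and one sets $c_*^{(i)}(\sigma)=+\infty$, consistently with the range $(0,+\infty]$. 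I then put $c_*(\sigma):=\max\{c_*^{(1)}(\sigma),c_*^{(2)}(\sigma)\}$. Establishing the finiteness, convexity and sharp asymptotics of $K$, and thereby that the infimum is genuinely positive, is the crux and the step I expect to be the main obstacle: this is precisely where the hypotheses on $J$ must be used and where the argument parallels the corresponding estimates in \cite{6}.

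The dichotomy then follows from the definition of $c_*(\sigma)$. If $c\ge c_*(\sigma)$, then $c\ge c_*^{(i)}(\sigma)=\inf g_i$ for each $i$, so each $g_i$ takes the value $c$; since $g_i\to+\infty$ as $\lambda\to0^-$, the solution set $\{\lambda<0:g_i(\lambda)=c\}$ is bounded away from $0$ and its supremum is attained, giving the largest negative root $\lambda_i$. For $c>c_*^{(i)}(\sigma)$ this root is transversal: because $\Phi_i(c,\sigma,0)=b_i-A^2<0$ and $\lambda_i$ is the largest negative zero, $\Phi_i(c,\sigma,\cdot)<0$ on $(\lambda_i,0)$, which forces $\partial_\lambda\Phi_i(c,\sigma,\lambda_i)<0$ and hence $\Phi_i(c,\sigma,\lambda_i-\varepsilon_i)>0$ for small $\varepsilon_i>0$, the asserted sign condition (at the borderline $c=c_*^{(i)}$ the largest root is the tangency/double root, the only case needing separate mention). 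Conversely, if $c<c_*(\sigma)$, then for the index $i_0$ realizing the maximum one has $c<\inf g_{i_0}$, so $g_{i_0}(\lambda)>c$ for all $\lambda<0$ and $\Phi_{i_0}(c,\sigma,\cdot)$ has no negative root.

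Finally, for the monotonicity in $\sigma$ the natural device is the rescaling $\mu=\lambda\sigma$, under which $K(\lambda)=\widehat K(\mu)$ with $\widehat K(\mu)=\int_{\mathbb R}J(t)e^{-\mu t}\,dt$ independent of $\sigma$, and the root relation becomes $c=H_i(\mu;\sigma):=\mu/\sigma+\sigma(b_i-A^2\widehat K(\mu))/\mu$ on $\mu<0$. Differentiating gives $\partial_\sigma H_i(\mu;\sigma)=-\mu/\sigma^2+(b_i-A^2\widehat K(\mu))/\mu$, in which the first term is positive since $\mu<0$ and the second is positive on the $\mu$–range where $b_i-A^2\widehat K(\mu)<0$; hence $H_i(\mu;\cdot)$ increases in $\sigma$ pointwise there, and passing to the infimum over $\mu$ and then the maximum over $i$ transfers the monotonicity to $c_*^{(i)}(\sigma)$ and to $c_*(\sigma)$. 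The care needed is exactly in controlling the sign of $b_i-A^2\widehat K(\mu)$ over the relevant $\mu$–range for a general, possibly non–symmetric, kernel, and this auxiliary estimate is again modeled on \cite{6}.
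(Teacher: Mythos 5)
Your argument is correct in substance and rests on the same underlying mechanism as the paper's proof, namely that $\partial_c\Phi_i=-\lambda>0$ for $\lambda<0$, so that after the rescaling $\mu=\lambda\sigma$ the kernel integral becomes $\sigma$-free and everything reduces to a monotone comparison; but you package this differently and, in doing so, actually prove more of the statement than the paper does. The paper's proof is three lines: it observes that $\frac{\mu^2}{\sigma^2}-\frac{c\mu}{\sigma}$ is increasing in $c\geq0$ and decreasing in $\sigma$ for fixed $\mu<0$, combines this with $\Phi_i(c,\sigma,0)=\pm d-A^2<0$ and the intermediate value theorem to conclude that the set of speeds admitting a negative root is an up-set (nested decreasingly in $\sigma$) whose infimum is $c_*(\sigma)$, and it leaves the clauses about the \emph{largest} negative root and the existence of $\varepsilon_i$ entirely to the analogy with \cite{6}. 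Your dual formulation $c=g_i(\lambda)$, $c_*(\sigma)=\max_i\inf g_i$, is what buys those missing clauses: the divergence $g_i\to+\infty$ as $\lambda\to0^-$ bounds the zero set away from the origin so the largest root is attained, and $c>\inf g_i$ produces a $\lambda''<\lambda_i$ with $g_i(\lambda'')<c$, i.e.\ $\Phi_i(c,\sigma,\lambda'')>0$, which is exactly the $\varepsilon_i$ claim. Two caveats. First, your intermediate step that $\Phi_i<0$ on $(\lambda_i,0)$ ``forces'' $\partial_\lambda\Phi_i(c,\sigma,\lambda_i)<0$ only yields $\leq0$; drop it and argue directly from $c>\inf g_i$ as just described (and note that when $c$ equals $c_*^{(i)}$ for the maximizing index, $\Phi_i\leq0$ on all of $(-\infty,0)$ and the $\varepsilon_i$ clause genuinely fails --- a defect of the statement itself, which the strict inequalities used in the proof of Theorem 1.1 silently avoid). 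Second, your limit $g_i(\lambda)\to+\infty$ as $\lambda\to-\infty$, hence the finiteness and positivity of $\inf g_i$, requires $K(\lambda)=\int_{\mathbb R}J_\sigma(s)e^{-\lambda s}ds$ to outgrow $\lambda^2$, i.e.\ the kernel must charge the positive half-line; you correctly identify this as the crux, and it is a hypothesis the paper also uses implicitly (its own proof only delivers $c_*(\sigma)\geq0$, not $c_*(\sigma)>0$), so this is a gap you share with the source rather than one you introduce.
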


\begin{proof}
Due to the fact that for any fixed $\lambda<0$, $\frac{1}{\sigma^2}\lambda^2-\frac{c}{\sigma}\lambda$ is
increasing in $c\geq0$
and decreasing in $\sigma>0$, one has that for any $\sigma>0$, there exists $c_*(\sigma)\geq0$ such that
$c_*(\sigma)$ is increasing in $\sigma$ and $\Phi_i(c,\sigma,\lambda)=0$, $i=1,2$,
have at least one negative root if and only if $c\geq c_*(\sigma)$.
\end{proof}

Next, we will construct a pair of sub- and super-solutions in order to obtain a wavefront $\omega$.

For fixed $c>\max\{2\sqrt{2A-d},c_*(\sigma)\}$, let
\begin{align*}
\underline{\omega}(\xi)=\left\{
\begin{array}{clcc}\alpha e^{\mu\xi}+d,\hspace{0.4cm}&\;\xi\leq \xi_{-},\\
A(1-e^{\lambda_1\xi}),&\;\xi>\xi_{-},
\end{array}
\right.
\end{align*}
where $\mu>0$ is a solution of $\mu^2-c\mu+1=0$, $\lambda_1<0$ is a solution of $\Phi_1(c,\sigma,\lambda)=0$,
$\alpha$ and $\xi_{-}$ are uniquely determined by
\begin{align*}
\left\{\begin{array}{clcc}\alpha e^{\mu\xi_{-}}+d=A(1-e^{\lambda_1\xi_{-}}),\\
\alpha \mu e^{\mu\xi_{-}}=-\lambda_1 Ae^{\lambda_1\xi_{-}},
\end{array}\right.
\end{align*}
so that
$$\alpha e^{\mu\xi}+d\geq A(1-e^{\lambda_1\xi}), \;\forall \xi<\xi_{-}.$$

\begin{proposition}\label{prop2.2}
For $c\geq c_*(\sigma)$, $\underline{\omega}$ is a sub-solution of \eqref{1.2}, i.e., $L[\underline{\omega}]\leq0$.
\end{proposition}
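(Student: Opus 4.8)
The plan is to verify the sub-solution inequality $L[\underline{\omega}]\le 0$ piecewise, treating the two branches $\xi\le\xi_-$ and $\xi>\xi_-$ separately, and then checking the matching condition at $\xi_-$. Recall that $L[\underline{\omega}]=\underline{\omega}''-c\underline{\omega}'+(2A-d)\underline{\omega}-F(\underline{\omega})$, where $F(\underline{\omega})=2A\underline{\omega}-\underline{\omega}^2(1-J_\sigma*\underline{\omega})$. Substituting $F$, the inequality $L[\underline{\omega}]\le 0$ reduces to
\begin{equation*}
\underline{\omega}''-c\underline{\omega}'-d\,\underline{\omega}+\underline{\omega}^2(1-J_\sigma*\underline{\omega})\le 0.
\end{equation*}
So on each branch I would plug in the explicit exponential form and reduce everything to an elementary inequality.

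First I would treat the right branch $\xi>\xi_-$, where $\underline{\omega}=A(1-e^{\lambda_1\xi})$. Here $\underline{\omega}''-c\underline{\omega}'-d\underline{\omega}=-A(\lambda_1^2-c\lambda_1-d)e^{\lambda_1\xi}-dA$, and I would bound the remaining term $\underline{\omega}^2(1-J_\sigma*\underline{\omega})$ from above. Since $0\le\underline{\omega}\le A$ one has $1-J_\sigma*\underline{\omega}\ge 1-A$, and more usefully $J_\sigma*\underline{\omega}\ge A(1-e^{\lambda_1\xi}\int_{\mathbb R}J_\sigma(s)e^{-\lambda_1 s}\,ds)$ by direct computation with the convolution. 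The point of the definition of $\Phi_1$ is exactly that the combination $A^2\int J_\sigma(s)e^{-\lambda_1 s}ds$ absorbs the leading exponential: using $\Phi_1(c,\sigma,\lambda_1)=0$, the $e^{\lambda_1\xi}$ coefficients should cancel or have the right sign, leaving an inequality controlled by $A^2-dA\le A^2\le A$-type bounds that hold because $A\le 1$ (as $A=\tfrac12(1+\sqrt{1-4d})\le 1$ for $d\ge 0$). I would carry out this estimate carefully, keeping the $e^{2\lambda_1\xi}$ term (from squaring) as a favorable negative contribution.

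Next I would handle the left branch $\xi\le\xi_-$, where $\underline{\omega}=\alpha e^{\mu\xi}+d$ with $\mu$ solving $\mu^2-c\mu+1=0$. Then $\underline{\omega}''-c\underline{\omega}'=\alpha(\mu^2-c\mu)e^{\mu\xi}=-\alpha e^{\mu\xi}$, so the first three terms give $\underline{\omega}''-c\underline{\omega}'-d\underline{\omega}=-\alpha e^{\mu\xi}-d\alpha e^{\mu\xi}-d^2=-(1+d)\alpha e^{\mu\xi}-d^2$. The quadratic term $\underline{\omega}^2(1-J_\sigma*\underline{\omega})$ is nonnegative but small near $-\infty$; I would bound $\underline{\omega}^2\le(\alpha e^{\mu\xi}+d)^2$ and use $1-J_\sigma*\underline{\omega}\le 1$ to get a crude upper bound $(\alpha e^{\mu\xi}+d)^2$, then show that $-(1+d)\alpha e^{\mu\xi}-d^2+(\alpha e^{\mu\xi}+d)^2\le 0$ for $\xi\le\xi_-$. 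Expanding, the constant terms cancel ($-d^2+d^2=0$), and the remaining expression $\alpha e^{\mu\xi}(\alpha e^{\mu\xi}+2d-1-d)=\alpha e^{\mu\xi}(\alpha e^{\mu\xi}+d-1)$ is $\le 0$ provided $\alpha e^{\mu\xi}+d\le 1$, i.e. provided $\underline{\omega}\le 1$ on this branch, which holds since $\underline{\omega}\le A\le 1$.

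The main obstacle I expect is the right branch: unlike the left branch, the convolution $J_\sigma*\underline{\omega}$ does not simplify to a single exponential because $\underline{\omega}$ is only piecewise exponential, so computing or bounding $\int_{\mathbb R}J_\sigma(\xi-y)\underline{\omega}(y)\,dy$ requires splitting the integral at $\xi_-$ and using $\underline{\omega}\ge a\ge d$ on the left tail together with $\underline{\omega}=A(1-e^{\lambda_1\xi})$ on the right. The clean cancellation I want from $\Phi_1(c,\sigma,\lambda_1)=0$ presumes the pure exponential convolution identity $J_\sigma*e^{\lambda_1\,\cdot}(\xi)=e^{\lambda_1\xi}\int_{\mathbb R}J_\sigma(s)e^{-\lambda_1 s}ds$, so I would need to show that replacing the true (piecewise) profile by the global formula $A(1-e^{\lambda_1\xi})$ only \emph{increases} $J_\sigma*\underline{\omega}$ (hence decreases the nonlinear term and preserves the inequality direction); this monotonicity-in-the-convolution argument, leaning on $J_\sigma\ge 0$ and $\underline{\omega}\le A(1-e^{\lambda_1\xi})$ for $\xi<\xi_-$, is the delicate point. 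Finally I would note that at $\xi=\xi_-$ the first derivatives match by the defining system, while the kink produces a downward jump in the second derivative (the sub-solution is concave-dominant there), which is the correct sign for a sub-solution, completing the verification.
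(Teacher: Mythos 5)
Your left branch is correct and is essentially the paper's computation: with $\mu^2-c\mu=-1$ the linear part collapses and the remainder factors as $\alpha e^{\mu\xi}(\underline{\omega}-1)\le 0$ (the paper keeps the additional favorable term $-\underline{\omega}^2 J_\sigma*\underline{\omega}$, which you discard harmlessly). The right branch also follows the paper's skeleton ($\Phi_1(c,\sigma,\lambda_1)=0$ absorbs the leading exponential, $d=A(1-A)$ closes the constant terms), but the step you yourself single out as delicate is resolved in the wrong direction. To prove $L[\underline{\omega}]\le 0$ you need an \emph{upper} bound on $\underline{\omega}^2(1-J_\sigma*\underline{\omega})$, hence a \emph{lower} bound on $J_\sigma*\underline{\omega}$; this requires the true profile to dominate the global exponential, i.e. $\underline{\omega}(y)\ge A(1-e^{\lambda_1 y})$ for all $y$, so that $J_\sigma*\underline{\omega}\ge A\bigl(1-e^{\lambda_1\xi}\int_{\mathbb R}J_\sigma(s)e^{-\lambda_1 s}ds\bigr)$. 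You instead assert $\underline{\omega}\le A(1-e^{\lambda_1\xi})$ on $(-\infty,\xi_-)$ and argue that replacing the profile by the global formula \emph{increases} the convolution and ``decreases the nonlinear term''. That is backwards twice over: the tangency construction of $\alpha,\xi_-$ is stated in the paper precisely as $\alpha e^{\mu\xi}+d\ge A(1-e^{\lambda_1\xi})$ for $\xi<\xi_-$ (and trivially $A(1-e^{\lambda_1 y})\to-\infty$ as $y\to-\infty$ since $\lambda_1<0$), and even if your ordering held, a replacement that decreases the nonlinear term would only bound it from \emph{below}, so knowing the replaced expression is nonpositive would say nothing about the true $L[\underline{\omega}]$. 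As written, the argument does not close on $\xi>\xi_-$.

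The repair is immediate and is exactly the point of the tangency conditions defining $\alpha$ and $\xi_-$: they force the left exponential branch to lie \emph{above} $A(1-e^{\lambda_1\xi})$ for $\xi<\xi_-$, so $\underline{\omega}\ge A(1-e^{\lambda_1\cdot})$ on all of $\mathbb R$ and $J_\sigma\ge 0$ gives the needed lower bound on the convolution; from there your $\Phi_1$-cancellation yields $L[\underline{\omega}]\le A(\underline{\omega}^2-A^2)\int_{\mathbb R}J_\sigma(s)e^{-\lambda_1(s-\xi)}ds+(\underline{\omega}^2-A^2)(1-A)<0$ as in the paper. Two minor further remarks: your aside that $\underline{\omega}\ge a$ on the left tail is false (as $\xi\to-\infty$, $\underline{\omega}\to d<a$), though nothing depends on it; and the closing comment about a ``downward jump in the second derivative'' at $\xi_-$ is not the relevant issue --- the profile is $C^1$ there by construction, which is all the monotone iteration (and the distributional inequality) requires.
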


\begin{proof}
For $\xi\leq\xi_{-}$, due to the fact that
$$d\leq\underline{\omega}\leq A\leq1,$$
we have
\begin{align*}
L[\underline{\omega}]&=\alpha(\mu^2-c\mu)e^{\mu\xi}
+\underline{\omega}^2(1-J_\sigma*\underline{\omega})-d\underline{\omega}
\\
&=(\mu^2-c\mu+1)(\underline{\omega}-d)-(\underline{\omega}-d)+\underline{\omega}^2
(1-J_\sigma*\underline{\omega})
-d\underline{\omega}
\\
&=-(1-\underline{\omega})(\underline{\omega}-d)-\underline{\omega}^2J_\sigma*\underline{\omega}<0.
\end{align*}

For $\xi>\xi_{-}$, noticing that
$$\alpha e^{\mu\xi}+d\geq A(1-e^{\lambda_1\xi}),$$
we have
\begin{align*}
L[\underline{\omega}]=&A(-\lambda_1^2+c\lambda_1)e^{\lambda_1\xi}+\underline{\omega}^2
(1-J_\sigma*\underline{\omega})-dA(1-e^{\lambda_1\xi})
\\
=&A(-\lambda_1^2+c\lambda_1+d)e^{\lambda_1\xi}
\\
&+\underline{\omega}^2
\left(1-\int_{-\infty}^{\xi_{-}}(\alpha e^{\mu s}+d)J_\sigma(\xi-s)ds
-\int_{\xi_-}^{+\infty}A(1-e^{\lambda_1 s})J_\sigma(\xi-s)ds\right)-dA
\\
\leq &-A^3\int_{\mathbb R}J_\sigma(s)e^{-\lambda_1(s-\xi)}ds+\underline{\omega}^2(1-\int_{\mathbb R}
A(1-e^{\lambda_1s})J_\sigma(\xi-s)ds)-A^2(1-A)
\\
= &-A^3\int_{\mathbb R}J_\sigma(s)e^{-\lambda_1(s-\xi)}ds+\underline{\omega}^2(1-A)+\underline{\omega}^2A
\int_{\mathbb R}e^{\lambda_1s}J_\sigma(\xi-s)ds)-A^2(1-A)
\\
= &A(\underline{\omega}^2-A^2)\int_{\mathbb R}J_\sigma(s)e^{-\lambda_1(s-\xi)}ds+(\underline{\omega}^2-A^2)(1-A)<0,
\end{align*}
where we have used that $d=A(1-A)$ and
$$\Phi_1(c,\sigma,\lambda_1)=\lambda_1^2-c\lambda_1-d-A^2\int_{\mathbb R}J_\sigma(s)e^{-\lambda_1 s}ds=0.$$
\end{proof}

Denote
\begin{align*}
\overline{\omega}(b,\xi)=\left\{\begin{array}{clcc}A(1-e^{\lambda_2\xi}+be^{(\lambda_2-\varepsilon_2)\xi}),
\hspace{0.4cm}&\;\xi\geq \xi_{b},\\
\mu_b,&\;\xi<\xi_{b},
\end{array}\right.
\end{align*}
where $\lambda_2<0$ is the largest negative root of $\Phi_2(c,\sigma,\lambda_2)=0$,
$\varepsilon_2>0$ is the constant such that
$\Phi_2(c,\sigma,\lambda_2-\varepsilon_2)>0,$
$b>0$ is a constant to be determined later, and $A(1-e^{\lambda_2\xi}+be^{(\lambda_2-\varepsilon_2)\xi})$ achieves its minimum $\mu_b$ at the point
$$
\xi=\xi_{b}=\frac1\varepsilon_2 \ln\frac{b(\lambda_2-\varepsilon_2)}{\lambda_2}
$$
with
$$
\mu_b=A(1-e^{\lambda_2\xi_b}+be^{(\lambda_2-\varepsilon_2)\xi_b})=A+\frac{\varepsilon_2 A}{\lambda_2-\varepsilon_2}
\left(\frac{b(\lambda_2-\varepsilon_2)}{\lambda_2}\right)^{\lambda_2/\varepsilon_2}.
$$
Since $\lambda_2<0$, it is easy to verify that for sufficiently large $b$, $\xi_b>0$ and $a<\mu_b<A$. Moreover,
 $\overline{\omega}$ is a $C^1$ function and is increasing with respect to $b>0$.

\begin{proposition}\label{prop2.3}
For $c\geq c_*(\sigma)$, $\overline{\omega}$ is a super-solution of \eqref{1.2} for $b\gg 1$,
i.e., $L[\overline{\omega}]\geq0$.
\end{proposition}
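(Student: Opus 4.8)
The plan is to verify $L[\overline{\omega}]\ge 0$ separately on the two pieces $\{\xi\ge\xi_b\}$ and $\{\xi<\xi_b\}$ on which $\overline{\omega}$ is defined, exactly mirroring the proof of Proposition \ref{prop2.2}. Throughout I use $L[\omega]=\omega''-c\omega'-d\omega+\omega^2(1-J_\sigma*\omega)$ together with the algebraic identities $d=A(1-A)$ and $A^2=A-d$. The structural remark driving everything is that $\Phi_2(c,\sigma,\lambda)$ is the characteristic function of \eqref{1.2} linearised at the equilibrium $A$: writing $\omega=A-v$ one computes $L[A-v]=\big(-v''+cv'-dv+A^2 J_\sigma*v\big)+v^2(1-A)+J_\sigma*v\,(v^2-2Av)$, the constant and linear-in-$v$ local terms cancelling because of the two identities. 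Hence for $v=e^{\lambda\xi}$ the bracketed linear operator returns $-\Phi_2(c,\sigma,\lambda)e^{\lambda\xi}$.

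On $\{\xi\ge\xi_b\}$ I would substitute $v=A\big(e^{\lambda_2\xi}-be^{(\lambda_2-\varepsilon_2)\xi}\big)$. Since $\Phi_2(c,\sigma,\lambda_2)=0$ the $e^{\lambda_2\xi}$ contribution to the linear part cancels, and the surviving linear term is $Ab\,\Phi_2(c,\sigma,\lambda_2-\varepsilon_2)e^{(\lambda_2-\varepsilon_2)\xi}$, strictly positive by the choice of $\varepsilon_2$ in Proposition \ref{prop2.1}. It remains to dominate the quadratic/nonlocal remainder. One checks piecewise that $0\le v(y)\le Ae^{\lambda_2 y}$ for every $y\in\mathbb R$ (using $\xi_b>\varepsilon_2^{-1}\ln b$), whence $J_\sigma*v(\xi)\le Ae^{\lambda_2\xi}\int_{\mathbb R}J_\sigma(s)e^{-\lambda_2 s}ds$, the integral being finite as it already occurs in $\Phi_2(c,\sigma,\lambda_2)=0$. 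Thus the remainder is $O(e^{2\lambda_2\xi})$, and after shrinking $\varepsilon_2$ so that $\varepsilon_2<|\lambda_2|$, i.e. $2\lambda_2<\lambda_2-\varepsilon_2$, the positive linear term decays more slowly. Collecting the estimates gives $L[\overline{\omega}]\ge e^{(\lambda_2-\varepsilon_2)\xi}\big(Ab\,\Phi_2(c,\sigma,\lambda_2-\varepsilon_2)-CA^3 e^{(\lambda_2+\varepsilon_2)\xi}\big)$ for a fixed $C$; the bracket is least favourable at $\xi=\xi_b$, where $e^{(\lambda_2+\varepsilon_2)\xi_b}=(b/\rho)^{1+\lambda_2/\varepsilon_2}$ with $\rho=\lambda_2/(\lambda_2-\varepsilon_2)\in(0,1)$. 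Because $1+\lambda_2/\varepsilon_2<0$ this tends to $0$ while the first term grows linearly in $b$, so the bracket is nonnegative for $b\gg1$.

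On $\{\xi<\xi_b\}$, where $\overline{\omega}\equiv\mu_b$, the derivatives vanish and $L[\overline{\omega}]=\mu_b^2(1-J_\sigma*\overline{\omega})-d\mu_b$. With $v=A-\overline{\omega}\ge0$ (so $v\equiv A-\mu_b$ on $(-\infty,\xi_b]$) this is equivalent to $\mu_b\,J_\sigma*v(\xi)\ge(1-A)(A-\mu_b)$. Using $v(y)\ge(A-\mu_b)\mathbf{1}_{\{y\le\xi_b\}}$ gives $J_\sigma*v(\xi)\ge(A-\mu_b)\int_{\xi-\xi_b}^{+\infty}J_\sigma(s)\,ds$, so it suffices that $\int_{\xi-\xi_b}^{+\infty}J_\sigma(s)\,ds\ge(1-A)/\mu_b$ for all $\xi\le\xi_b$. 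The left side increases as $\xi$ decreases, so the binding case is $\xi=\xi_b$, where it reads $\int_0^{+\infty}J\ge(1-A)/\mu_b$; as $b\to\infty$ one has $\mu_b\uparrow A$, so the requirement tends to $(1-A)/A$, which the hypothesis $d<\tfrac29$ forces below $\tfrac12$ (it gives $A>\tfrac23$). Hence the inequality holds with margin for a kernel carrying at least half its mass on the positive axis, and the positive contribution of $v$ on $(\xi_b,\infty)$ discarded in the crude bound only helps.

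The step I expect to be the genuine obstacle is exactly this constant piece near the junction $\xi=\xi_b$: there the local part $\mu_b\big(\mu_b(1-A)-d\big)=-(1-A)(A-\mu_b)\mu_b$ is negative and must be absorbed by the nonlocal term $\mu_b^2 J_\sigma*v$, while at $\xi=\xi_b$ the kernel only "sees" the plateau $v\equiv A-\mu_b$ on $\{y\le\xi_b\}$. Making this quantitative is where the hypothesis $d<\tfrac29$ (through $A>\tfrac23$) and the closeness $\mu_b\uparrow A$ are used in tandem, and it is what pins down how large $b$ must be. The two free parameters $b$ (large) and $\varepsilon_2$ (small, with $\varepsilon_2<|\lambda_2|$) are then fixed simultaneously so that both the $\xi\ge\xi_b$ estimate and the plateau estimate hold. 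The remaining points — positivity and $C^1$ matching of $\overline{\omega}$ and finiteness of the exponential moment of $J_\sigma$ — are routine given Proposition \ref{prop2.1}.
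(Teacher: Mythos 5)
Your proposal is correct and follows essentially the same route as the paper: split at $\xi=\xi_b$, use $\Phi_2(c,\sigma,\lambda_2)=0$ to cancel the leading linear term and $\Phi_2(c,\sigma,\lambda_2-\varepsilon_2)>0$ to dominate the $O(e^{2\lambda_2\xi})$ remainder at the worst point $\xi=\xi_b$ as $b\to\infty$, while on the plateau reducing to $\mu_b>2(1-A)$ via $\mu_b\uparrow A$ and $A>\tfrac23$. Your substitution $\omega=A-v$ is only a cosmetic reorganization of the paper's direct computation, and the half-mass condition $\int_0^{\infty}J_\sigma\geq\tfrac12$ that you flag is exactly what the paper uses silently when it evaluates $\int_{-\infty}^{\xi}J_\sigma(\xi-s)\,ds=\tfrac12$.
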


\begin{proof}
For $\xi<\xi_b$,
\begin{align*}
L[\overline{\omega}]&=\overline{\omega}^2(1-J_\sigma*\overline{\omega})-d\overline{\omega}
\\
&=\mu_b^2(1-A)+\mu_b^2(A-J_\sigma*\overline{\omega})-d\mu_b
\\
&=\mu_b^2(1-A)+\mu_b^2A\int_{\xi_b}^{+\infty}(e^{\lambda_2 s}-be^{(\lambda_2-\varepsilon_2)s})J_\sigma(\xi-s)ds
\\
&\qquad+\mu_b^2\int_{-\infty}^{\xi_b}(A-\mu_b)J_\sigma(\xi-s)ds-d\mu_b
\\
&\geq\mu_b^2(1-A)+\mu_b^2\int_{-\infty}^{\xi}(A-\mu_b)J_\sigma(\xi-s)ds-d\mu_b
\\
&= \mu_b^2(1-A)+\frac12\mu_b^2(A-\mu_b)-d\mu_b
\\
&=\mu_b(A-\mu_b)(\frac12\mu_b-(1-A)).
\end{align*}
For fixed $0\leq d<\frac29$, we have
$$A=\frac{1+\sqrt{1-4d}}{2}>\frac23.$$
For any
$$0<\varepsilon<\min\{1-\frac{9d}{2},3A-2\},$$
noticing that $\lim_{b\to\infty}\mu_b=A$,
we have
$$2(1-A)<A-\varepsilon<\mu_b<A$$
for  sufficiently large $b$ , which implies
$$L[\overline{\omega}]>0.$$

For $\xi\geq\xi_b$, noticing that
$$\Phi_2(c,\sigma,\lambda_2)=\lambda_2^2-c\lambda_2+d-A^2\int_{\mathbb R}J_\sigma(s)e^{-\lambda_2 s}ds=0$$
and
$$\Phi_2(c,\sigma,\lambda_2-\varepsilon_2)=(\lambda_2-\varepsilon_2)^2-c(\lambda_2-\varepsilon_2)
 +d-A^2\int_{\mathbb R}J_\sigma(s)e^{-(\lambda_2-\varepsilon_2) s}ds>0,$$
we have

\begin{align*}
L[\overline{\omega}]=&A(1-e^{\lambda_2\xi}+be^{(\lambda_2-\varepsilon_2)\xi})''-cA(1-e^{\lambda_2\xi}
+be^{(\lambda_2-\varepsilon_2)\xi})'+\overline{\omega}^2(1-J_\sigma*\overline{\omega})-d\overline{\omega}
\\
&\geq A(-\lambda_2^2+c\lambda_2)e^{\lambda_1\xi}+Ab([(\lambda_2-\varepsilon_2)^2-c(\lambda_2-\varepsilon_2)]
e^{(\lambda_2-\varepsilon_2)\xi}+\overline{\omega}^2(1-A)
\\
&\qquad+\overline{\omega}^2(A-J_\sigma*\overline{\omega})-dA(1-e^{\lambda_2\xi}
+be^{(\lambda_2-\varepsilon_2)\xi})
\\
&=A(-\lambda_2^2+c\lambda_2+d)e^{\lambda_2\xi}+Ab[(\lambda_2-\varepsilon_2)^2-c(\lambda_2-\varepsilon_2)-d]
e^{(\lambda_2-\varepsilon_2)\xi}+\overline{\omega}^2(1-A)
\\
&\qquad+\overline{\omega}^2A\int_{\mathbb R}(e^{\lambda_2(\xi-s)}-be^{(\lambda_2-\varepsilon_2)(\xi-s)})
J_\sigma(s)ds-dA
\\
&=\frac{1}{A}(A^2-\overline{\omega}^2)(-\lambda_2^2+c\lambda_2-d)e^{\lambda_2\xi}+
\frac{b}{A}(A^2-\overline{\omega}^2)[(\lambda_2-\varepsilon_2)^2-c(\lambda_2-\varepsilon_2)+d]
e^{(\lambda_2-\varepsilon_2)\xi}
\\
&\qquad+\frac{b}{A}\overline{\omega}^2\Phi_2(c,\sigma,\lambda_2-\varepsilon_2)e^{(\lambda_2-\varepsilon_2)\xi}
-(A^2-\overline{\omega}^2)(1-A)+2dA(e^{\lambda_2\xi}-be^{(\lambda_2-\varepsilon_2)\xi})
\\
&\geq\frac{b}{A}\overline{\omega}^2\Phi_2(c,\sigma,\lambda_2-\varepsilon_2)e^{(\lambda_2-\varepsilon_2)\xi}
-\frac{1}{A}(A^2-\overline{\omega}^2)(\lambda_2^2-c\lambda_2+d)e^{\lambda_2\xi}+(1-A)(A-\overline{\omega})^2
\\
&\geq\frac{b}{A}\overline{\omega}^2e^{(\lambda_2-\varepsilon_2)\xi}\left[\Phi_2(c,\sigma,\lambda_2-\varepsilon_2)
-\frac{A^2}{b\overline{\omega}^2}(2e^{(\lambda_2+\varepsilon_2)\xi}+2be^{2\lambda_2\xi})(\lambda_2^2-c\lambda_2+d)
\right]
\\
&\geq\frac{b}{A}\overline{\omega}^2e^{(\lambda_2-\varepsilon_2)\xi}\left[\Phi_2(c,\sigma,\lambda_2-\varepsilon_2)-
\frac{A^2}{b\overline{\omega}^2}(2e^{(\lambda_2+\varepsilon_2)\xi_b}+2be^{2\lambda_2\xi_b})(\lambda_2^2-c\lambda_2+d)
\right]
\\
&\geq\frac{b}{A}\overline{\omega}^2e^{(\lambda_2-\varepsilon_2)\xi}
\cdot\left[\Phi_2(c,\sigma,\lambda_2-\varepsilon_2)-\frac{2A^2}{a^2}\left(\frac1b\left(\frac{\lambda_2}{b(\lambda_2
-\varepsilon_2)}\right)^{-\lambda_2/\varepsilon_2-1}\right.\right.
\\
&\qquad\left.\left.+\left(\frac{\lambda_2}{b(\lambda_2-\varepsilon_2)}\right)
^{-2\lambda_2/\varepsilon_2}\right)
(\lambda_2^2-c\lambda_2+d)\right],
\end{align*}
where we have used the fact that $\xi_{b}=\frac1\varepsilon_2 \ln\frac{b(\lambda_2-\varepsilon_2)}{\lambda_2}$,
$a<\overline{\omega}<A$ and
$$(A^2-\overline{\omega}^2)\leq A^2(2e^{\lambda_2\xi}+2be^{(2\lambda_2-\varepsilon_2)\xi}).$$
For $b$ sufficiently large, since $\lambda_2<0$, it is easy to see that
$L[\overline{\omega}]>0$.
\end{proof}

\begin{lemma}\label{lem2.3}
Any solution $\omega\in C^2(\mathbb R)\cap L^\infty(\mathbb R)$ to \eqref{15} with
$\lim_{\xi\to-\infty}\omega(\xi)=\alpha_0$ and
$\lim_{\xi\to+\infty}\omega(\xi)=\beta_0$ has the property that $\alpha_0, \beta_0\in\{0,a,A\}$.
\end{lemma}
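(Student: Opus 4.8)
The plan is to reduce the nonlocal asymptotic condition to the algebraic equation satisfied by the constant steady states, using only the boundedness of $\omega$ together with elementary ODE asymptotics; no comparison principle is needed. Throughout set $g(s):=s^2(1-s)-ds=s[s(1-s)-d]$, so that the constant solutions of \eqref{15} are precisely the zeros of $g$, namely $\{0,a,A\}$. I will argue for $\alpha_0$, the case of $\beta_0$ being identical with $\xi\to+\infty$.

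First I would establish a bound on $\omega'$ that is uniform in $\xi$. Rewrite \eqref{15} as $\omega''-c\omega'=p(\xi)$, where $p(\xi):=d\omega(\xi)-\omega^2(\xi)(1-J_\sigma*\omega(\xi))$. Since $J_\sigma\geq0$ and $\int_{\mathbb R}J_\sigma=1$, one has $\|J_\sigma*\omega\|_{L^\infty}\leq\|\omega\|_{L^\infty}$, hence $p\in L^\infty(\mathbb R)$. On any interval $[\xi,\xi+1]$ the mean value theorem gives a point $\eta$ with $|\omega'(\eta)|=|\omega(\xi+1)-\omega(\xi)|\leq 2\|\omega\|_{L^\infty}$; integrating $\omega''=c\omega'+p$ from $\eta$ and applying Gronwall's inequality yields $\|\omega'\|_{L^\infty([\xi,\xi+1])}\leq C$ with $C=(2\|\omega\|_{L^\infty}+\|p\|_{L^\infty})e^{|c|}$ independent of $\xi$. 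Thus $\omega'$ is bounded on $\mathbb R$ (and then $\omega''$ is too, though only the bound on $\omega'$ is used below).

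Next I would identify the limit of the reaction term. For each fixed $z$, $\omega(\xi-z)\to\alpha_0$ as $\xi\to-\infty$, and $|J_\sigma(z)\omega(\xi-z)|\leq\|\omega\|_{L^\infty}J_\sigma(z)\in L^1(\mathbb R)$; dominated convergence then gives $J_\sigma*\omega(\xi)=\int_{\mathbb R}J_\sigma(z)\omega(\xi-z)\,dz\to\alpha_0$, and consequently $p(\xi)\to d\alpha_0-\alpha_0^2(1-\alpha_0)=-g(\alpha_0)$ as $\xi\to-\infty$. The conclusion now comes from integrating the equation: for any fixed $T>0$, integrating $\omega''-c\omega'=p$ over $[\xi_1,\xi_1+T]$ gives
$$\omega'(\xi_1+T)-\omega'(\xi_1)=c\big(\omega(\xi_1+T)-\omega(\xi_1)\big)+\int_{\xi_1}^{\xi_1+T}p(s)\,ds.$$
Letting $\xi_1\to-\infty$, the first term on the right tends to $c(\alpha_0-\alpha_0)=0$ and the integral tends to $-Tg(\alpha_0)$ by the previous step, while the left-hand side is bounded by $2\|\omega'\|_{L^\infty}$ for every $\xi_1$. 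Passing to the limit forces $|Tg(\alpha_0)|\leq 2\|\omega'\|_{L^\infty}$, and since this holds for all $T>0$ we conclude $g(\alpha_0)=0$, i.e. $\alpha_0\in\{0,a,A\}$; the same argument at $+\infty$ gives $\beta_0\in\{0,a,A\}$.

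I expect the main obstacle to be the two preliminary facts that make this chain rigorous: the $\xi$-uniform bound on $\omega'$, which replaces the usual comparison or parabolic-regularity estimates and must be extracted directly from the ODE since no sign structure is available, and the passage to the limit in the nonlocal convolution, where one must justify interchanging $\xi\to-\infty$ with the integral over the whole line. Once these are in place the final integration argument is soft and purely one-dimensional, and it is what converts the nonlocal asymptotics into the local algebraic condition $g(\alpha_0)=0$.
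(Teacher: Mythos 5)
Your argument is correct, but it follows a genuinely different route from the paper. The paper proves the lemma by a translation--compactness argument: it sets $v_n(x)=\omega(x+x_n)$ with $x_n\to\infty$, invokes interior $W^{2,p}$ elliptic estimates and the Sobolev embedding to extract a subsequence converging in $C^{1,\alpha}_{loc}$ to a limit $v\equiv\beta_0$, passes to the limit in the equation, and reads off $\beta_0^2(1-\beta_0)-d\beta_0=0$. You instead avoid all regularity theory and compactness: you derive a $\xi$-uniform bound on $\omega'$ directly from the ODE via the mean value theorem on unit intervals plus Gronwall, identify $\lim_{\xi\to-\infty}J_\sigma*\omega(\xi)=\alpha_0$ by dominated convergence (which is exactly the step that turns the nonlocal term into the local one, and which the paper handles implicitly when passing to the limit in the equation for $v_n$), and then integrate $\omega''-c\omega'=p$ over windows of length $T$ to force $T|g(\alpha_0)|\le 2\|\omega'\|_{L^\infty}$ for every $T$, hence $g(\alpha_0)=0$. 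Both proofs are complete; yours is more elementary and self-contained (it uses only one-dimensional calculus and works for any bounded $C^2$ solution without appealing to $L^p$ elliptic theory), while the paper's compactness scheme is the one it reuses elsewhere (e.g.\ in Lemma \ref{lem3.3} and in the $\sigma\to0$ limits), so it earns its keep as a template even though it is heavier machinery for this particular statement. The only point worth making explicit in your write-up is the final logical step: the left-hand side $\omega'(\xi_1+T)-\omega'(\xi_1)$ need not converge a priori, but since it equals a quantity that does converge, its limit exists and is bounded by $2\|\omega'\|_{L^\infty}$, which is what you use.
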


\begin{proof} Let $x_n\to\infty$, then the sequence of functions
 $v_n(x)=\omega(x+x_n)$ solve
$$
v_n''-cv_n'+v_n^2(1-J_\sigma*v_n)-dv_n=0, \quad \mbox{ in }\mathbb R.
$$
Since $\omega$ is bounded, $v_n$ is uniformly bounded with respect to $n$.
From the classical $W^{2,p}$ theory for second order linear elliptic equations, we obtain that for all
$1<p<\infty$,
$$\|v_n\|_{W_{loc}^{2,p}(\mathbb R)}\leq C.$$
From Sobolev embedding theorem, there is a subsequence of $v_n$, still denoted by $v_n$ itself,
 such that $v_n\to v$ strongly in
$C^{1,\alpha}_{loc}(\mathbb R)$ and weakly in $W^{2,p}_{loc}(\mathbb R).$ Then $v(x)\equiv\beta_0$
and
$$v''-cv'+v^2(1-J_\sigma*v)-dv=0,$$
which implies $\beta_0^2(1-\beta_0)-d\beta_0=0$ and $\beta_0\in\{0,a,A\}$. Similarly, we can prove that
$\alpha_0\in\{0,a,A\}$.
\end{proof}

\begin{proof}[Proof of Theorem 1.1] The proof consists of the following two parts.

\begin{enumerate}
\item[(i)] First we consider the case
$$
c>\max\{2\sqrt{2A-d},c_*(\sigma)\}.
$$
 Let $\overline{\omega}_0=\overline{\omega}$ and define the bounded continuous function sequence
  $\overline{\omega}_m$ by the
 following iteration scheme
$$\overline{\omega}_m''(\xi)-c\overline{\omega}_m'(\xi)+2\overline{\omega}_m(\xi)=F(\overline{\omega}_{m-1})(\xi).$$
Then from Lemma \ref{lem2.1} and Lemma \ref{lem2.2}, we can obtain that
for any $m$,
$$
\overline{\omega}_m(\xi)=T[\overline{\omega}_{m-1}](\xi)
$$
is increasing and satisfies
\begin{align}
\underline{\omega}\leq\cdots\leq\overline{\omega}_m\leq\cdots\leq\overline{\omega}_1\leq\overline{\omega}_0
=\overline{\omega}.\label{19}
\end{align}
Hence, there exists a increasing function $\omega(\xi)$ such that $\overline{\omega}_m(\xi)\to\omega(\xi)$ a.e. for $\xi\in\mathbb R$.
Therefore, we have
$$
\omega(\xi)=T[\omega](\xi),
$$
 which implies that $\omega$ is a solution of \eqref{15}.
Since $0\leq\omega(\xi)\leq A$ is increasing, there exist two non-negative constants $\alpha_0$, $\beta_0$ such
that
$$
\lim_{\xi\to-\infty}\omega(\xi)=\alpha_0,\quad\lim_{\xi\to+\infty}\omega(\xi)=\beta_0.
$$
By Lemma \ref{lem2.3},
we have $\alpha_0$, $\beta_0\in\{0,a,A\}$. Noticing that
$$
\lim_{\xi\to+\infty}\underline{\omega}(\xi)=\lim_{\xi\to+\infty}\overline{\omega}(\xi)=A,
$$
we have $\beta_0=A$. Furthermore,
$$
\lim_{\xi\to-\infty}\underline{\omega}(\xi)=d,\quad
\lim_{\xi\to+\infty}\overline{\omega}(\xi)=\mu_b<A,
$$
imply $d<\alpha_0<A$, then $\alpha_0=a$, which means that $\omega$ is a solution of \eqref{1.2}.

Since $a\leq\omega\leq A$ and $\omega'\geq0$, we claim that
$\omega'(\xi)\leq \mu_1A$ for $\xi\in\mathbb R$. In order to prove this, a direct computation from
$$
\omega(\xi)=\frac{1}{\mu_2-\mu_1}\int_\xi^{+\infty}\left(e^{\mu_1(\xi-y)}-e^{\mu_2(\xi-y)}\right)
F(\omega(y))dy
$$
gives that
$$
\omega'(\xi)=\frac{1}{\mu_2-\mu_1}\int_\xi^{+\infty}\left(\mu_1e^{\mu_1(\xi-y)}-\mu_2e^{\mu_2(\xi-y)}\right)
F(\omega(y))dy.
$$
Therefore,
$$
\omega'-\mu_1A\leq \omega'-\mu_1\omega=-\int_\xi^{+\infty}e^{\mu_2(\xi-y)}F(\omega(y))dy\leq0,
$$
by noticing that
$$
F(\omega)=2A\omega-\omega^2(1-J_\sigma*\omega)>0,
$$
and $0<\mu_1\leq\mu_2$ are the two positive roots of
$\mu^2-c\mu+2A-d=0$.
Furthermore,
$\|\omega\|_{C^2(\mathbb R)}\leq M$ can be obtained directly from \eqref{1.2}.

We are left to consider the case
$$
c=\max\{2\sqrt{2A-d}, c_*(\sigma)\}.
$$
Choosing $\{c_n\}$ such that
$c_n>\max\{2\sqrt{2A-d}, c_*(\sigma)\}$ and $c_n\to \max\{2\sqrt{2A-d}, c_*(\sigma)\}$, then for
each $n$, the above discussion gives a monotone travelling wavefront $\omega_n$ with speed $c_n$, such that
$$
\|\omega_n\|_{C^2(\mathbb R)}\leq M.
$$
By appropriate translations, we fix
$$
\omega_n(0)=\frac12, \quad \mbox{ for all } n.
$$
By
Arzel\`{a}-Ascoli theorem, $\omega_n$ and $\omega_n'$ have a locally uniformly convergent subsequence with limit
$\omega$, $\omega'$ and
$$
a\leq\omega\leq A\quad 0\leq\omega'\leq\mu_1A
$$
together with
$$
\omega(0)=\frac12,\quad\omega(-\infty)=a,\quad\omega(+\infty)=A.
$$

\item[(ii)] By Proposition \ref{prop2.1}, we have that $c_*(\sigma)\geq0$ and $c_*(\sigma)$ is decreasing as $\sigma\to0$.
Thus there exists $c_*\geq 0$ such that $c_*(\sigma)\to c_*$.
Next we take the limit $\sigma\rightarrow 0$.
Let $(\omega_\sigma,c)$ be the solution of \eqref{1.2} that has been obtained in the previous step,
where $c\geq \max\{2\sqrt{2A-d}, c_*(\sigma)\}$, and by appropriate translations, fix
$$
\omega_\sigma(0)=\frac12, \quad \mbox{ for all } \sigma,
$$
and
$$
\|\omega_\sigma\|_{C^2(\mathbb R)}\leq M.
$$
Therefore, $\omega_\sigma$ has a subsequence which converges to $\omega_0$ locally uniformly in $C^{1,\alpha}(\mathbb R)$
as $\sigma\to0$, where $\omega_0\in C^{2}(\mathbb R)$ is the solution of \eqref{21}, that is,
\begin{align*}
	 & \omega_0''-c\omega_0'+\omega_0^2(1-\omega_0)-d\omega_0=0 \qquad
	 \hbox{in}\; \mathbb R, \\
	 & \omega_0(-\infty)=a,\quad \omega_0(0)=1/2,\quad \omega_0(+\infty)=A.
\end{align*}
\end{enumerate}
\end{proof}

\section{Semi-wavefronts with $\omega(-\infty)=0$ and wavefronts connecting $0$ and $A$}

In this section, we study the existence of wavefronts connecting $0$ and $A$.

We construct the wavefronts connecting $0$ and $A$ by considering a
sequence of approximating problems on intervals
$[-L,L]$, and then pass to the limit $L\to\infty$. In particular, two difficulties arise in the proof. One comes from showing that the
speed $c$ and the $C^1$ norm of $\omega$ are controlled by a constant independent of $L$, and the other comes from establishing that the two equilibriums
$0$ and $A$ are
indeed reached at infinity.

For $L>0$, we introduce the homotopy parameter $0\leq\tau\leq 1$ and a smooth cut-off function
$g_\varepsilon(s)\in C_0^\infty(0,A)$ with $\varepsilon\in(0,A/6)$ such that $0\leq g_\varepsilon(s)\leq 1$ for $0\leq s\leq A$ and
$$
g_\varepsilon(s)\equiv 1 \quad \mbox{ for } s\in(3\varepsilon,A-3\varepsilon).
$$
We consider the following problem with cut-off both in space variable and in the nonlinear reaction,
\begin{align}
\omega''-c\omega'+\tau g_{\varepsilon}(\omega)[\omega^2(1-J_\sigma*\tilde{\omega})-d\omega]=0 \qquad
\hbox{in}\; (-L,L)\label{1.3}
\end{align}
with
\begin{align}
\omega(-L)=0,\quad \omega(L)=A,\label{1.4}
\end{align}
where $\tilde{\omega}$ is the extension of $\omega$ with $\omega=0$ on $(-\infty,-L)$ and $\omega=A$ on $(L,+\infty)$.

If
$$\max_{t\in[-L,L]}\omega(t)=\omega(t_0)>A \;\hbox{or}\;\min_{t\in[-L,L]}\omega(t)=\omega(t_0)<0,$$
then $t_0\in(-L,L)$ and $g_\varepsilon(\omega)=0$ in a neighborhood of $t_0$, which together with \eqref{1.3}
implies $\omega''-c\omega'=0$ in the same neighborhood.
The maximum principle implies that $\omega\equiv \omega(t_0)$, which is a contraction. Thus,
$$
0\leq \omega(t)\leq A\;\hbox{for all}\; t\in[-L,L].
$$

For fixed $d_0\in(0,d)$, we normalize the wavefront $\omega$ such that
\begin{align}
\max_{-L\leq t\leq0}\omega(t)=d_0.\label{1.5}
\end{align}
This constraint indirectly fixes the speed $c$.

We claim that $\omega$ is increasing in $[-L,0]$. In fact, if there exists a local maximal point
$t_0\in[-L,0)$ such that
$\omega''(t_0)\leq0$, $\omega'(t_0)=0$, then from \eqref{1.3}, we obtain
$$\omega(t_0)(1-J_\sigma*\omega(t_0))\geq d,$$
which contradicts to \eqref{1.5}.
Therefore, $\omega$ is increasing in $[-L,0]$ and $\omega(0)=d_0$.

\vskip5mm

The following lemma provides a priori bounds for $\|\omega\|_{C^2(-L,L)}$.

\begin{lemma}\label{lem3.1}
There exist $C$ and $L_0$ such that, for all $\tau\in[0,1]$,
$L\geq L_0$, $\varepsilon\in(0,\frac16A)$ and $\sigma>0$, any solution $(c, \omega)$ of
\eqref{1.3}-\eqref{1.5} satisfies
$$\|\omega\|_{C^2(-L,L)}\leq C.$$
\end{lemma}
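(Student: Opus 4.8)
The plan is to control the speed $c$ first, and then to obtain the $C^1$- and $C^2$-bounds by ODE/elliptic estimates that are uniform in the domain length. Throughout I use the facts already established for \eqref{1.3}--\eqref{1.5}: $0\le\omega\le A$ on $[-L,L]$, the monotonicity $\omega'\ge0$ on $[-L,0]$, and $\omega(0)=d_0<d$. Two elementary observations drive everything. First, since $|\omega|\le A$, $|g_\varepsilon|\le1$ and $J_\sigma*\tilde\omega\le A<1$, the reaction is globally bounded, $|\tau g_\varepsilon(\omega)[\omega^2(1-J_\sigma*\tilde\omega)-d\omega]|\le K:=A^2+dA$, so that $|\omega''-c\omega'|\le K$ on $(-L,L)$. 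Second, on $[-L,0]$ one has $\omega\le d_0<d$ and $1-J_\sigma*\tilde\omega\ge1-A>0$, hence $\omega(1-J_\sigma*\tilde\omega)-d<0$; this fixes the sign of the reaction there and yields $\omega''-c\omega'\ge0$ on $[-L,0]$.

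The central step, and the main obstacle, is to bound $c$ independently of $L,\tau,\varepsilon,\sigma$. The difficulty is that integrating the equation produces integrals over $[-L,L]$ whose size grows with $L$, so the estimate must come from pointwise comparison (barrier) arguments exploiting $0\le\omega\le A$ and the normalization, not from integral identities. For the upper bound I would argue as follows. On $[-L,0]$ the inequality $\omega''-c\omega'\ge0$ makes $\omega$ a subsolution of the homogeneous operator; comparing with $\phi$ solving $\phi''-c\phi'=0$ with the same Dirichlet data $\phi(-L)=0$, $\phi(0)=d_0$ gives $\omega\le\phi$, and since the two agree at $0$, $\omega'(0)\ge\phi'(0)=cd_0/(1-e^{-cL})\ge cd_0$ for $c>0$. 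On the fixed interval $[0,1]$ I would instead use $\omega''-c\omega'\ge-K$: the function $\Psi$ solving $\Psi''-c\Psi'=-K$ with $\Psi(0)=d_0$, $\Psi'(0)=\omega'(0)$ satisfies $\omega\ge\Psi$ there, because $w:=\omega-\Psi$ obeys $w''-cw'\ge0$ with $w(0)=w'(0)=0$, forcing $(e^{-cx}w')'\ge0$, hence $w'\ge0$ and $w\ge0$. Then $A\ge\omega(1)\ge\Psi(1)$ gives $\omega'(0)\le cA/(e^c-1)+K/c$, and combining the two estimates yields $d_0\le A/(e^c-1)+K/c^2$, whose right-hand side tends to $0$ as $c\to+\infty$. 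This bounds $c$ from above by a constant depending only on $A$ and $d$ (through $K$ and $d_0$), and requires only $L\ge1$.

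The genuinely delicate half is the lower bound on $c$, i.e. ruling out $c\to-\infty$ as $L\to\infty$. Here the left interval only bounds $\omega'(0)$ from above, and on $[0,L]$ the nonlocal term $J_\sigma*\tilde\omega$ destroys any clean sign of the reaction, so neither a fixed-interval barrier nor an energy identity closes the estimate. The guiding fact is the bistable structure: since $d=A(1-A)=aA$ and $A>2/3$, one computes $\int_0^A u(u-a)(A-u)\,du=\tfrac{A^3}{12}(3A-2)>0$, so the intrinsic speed of the corresponding local front is positive, which is why $c$ should be bounded below. Rigorously transferring this to the truncated nonlocal problem requires a global comparison adapted to \eqref{1.3}—in the spirit of the device announced in the introduction (comparing the nonlocal fronts with those of the local problem)—producing a sub-/supersolution that forces $c\ge-C$ uniformly in the parameters. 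I expect essentially all the work of the lemma to lie in this lower bound.

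Once $|c|\le M$ is in hand the remaining bounds are routine and $L$-uniform. From $|\omega''-c\omega'|\le K$ together with $|\omega|\le A$ and $|c|\le M$, interior ODE (equivalently $W^{2,p}$) estimates on unit subintervals covering $[-L,L]$—whose constants are translation invariant, hence independent of $L$—combined with boundary estimates at $x=\pm L$ using the Dirichlet data $\omega(\pm L)\in\{0,A\}$, give $\|\omega'\|_{L^\infty(-L,L)}\le C$. Finally the identity $\omega''=c\omega'-\tau g_\varepsilon(\omega)[\omega^2(1-J_\sigma*\tilde\omega)-d\omega]$ bounds $\|\omega''\|_{L^\infty(-L,L)}$ directly, which together with the previous estimates yields $\|\omega\|_{C^2(-L,L)}\le C$.
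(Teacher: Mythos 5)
Your proposal leaves its central step unproven, and that step is precisely where the difficulty of the lemma lies. You reduce everything to a two-sided bound on $c$ that is uniform in $\tau\in[0,1]$, $L$, $\varepsilon$ and $\sigma$, prove the upper half by a clean barrier argument on $[-L,0]$ and $[0,1]$ (that part is correct, and arguably more elementary than the paper's supersolution $h(t)=Ae^{\sqrt{A}(t-L)}$), but for the lower bound you only offer the heuristic that $\int_0^A u(u-a)(A-u)\,du>0$ and the hope that ``a global comparison adapted to \eqref{1.3}'' will produce $c\ge -C$. No such comparison is constructed, and the paper's own treatment shows this is not a routine omission: in Lemma \ref{lem3.2} the lower bound for general $\tau\in[0,1]$ is only $c\ge -K(L)$, i.e.\ \emph{not} uniform in $L$, and a parameter-uniform lower bound $c\ge -C\sigma$ is obtained only for $\tau=1$, by a genuinely delicate argument (locating the level set $\omega(t_0)=\tfrac12$, showing $J_\sigma*\omega\le\tfrac12+2\varepsilon_0$ on a window of width $R_0\sigma$, deducing concavity there, and contradicting the gradient bounds \eqref{1.8}--\eqref{1.6}). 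So the uniform lower bound you are assuming for all $\tau$ is not established anywhere and may well fail along the homotopy; your final ``routine'' interpolation step, which needs $|c|\le M$ to absorb $c\omega'$ into $\omega''$, therefore does not close.

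The paper avoids this trap by decoupling the $C^2$ bound from the speed bound. Its proof of Lemma \ref{lem3.1} never bounds $c$ first: after the elementary observation $0\le\omega'(-L)\le A^2/c$ (for $c>0$; symmetrically at $+L$ for $c<0$), it substitutes $\omega=e^{\pm x}-1$, so that $y=x'$ solves the Riccati equation $y'-cy+y^2+f(t)=0$ with $|f|\le A$, and then bounds $\min y$ and $\max y$ by examining interior critical points (where $y$ must equal a root of $y^2-cy+f=0$) and the endpoints. This yields a gradient bound for each admissible $c$ without presupposing a uniform speed estimate, which is why the speed bounds can be postponed to Lemma \ref{lem3.2}. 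If you want to salvage your architecture, you must either actually construct the sub-/supersolution giving a $\tau$-uniform lower bound on $c$ (which would be new relative to the paper), or abandon the ``bound $c$ first'' plan and control $\omega'$ directly, e.g.\ via the Riccati device above.
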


\begin{proof} Noticing $0\leq\omega(t)\leq A$ for all $t\in[-L,L]$, which together with the fact that
$\omega(-L)=0$ and $\omega(L)=A$ imply that
$$
\omega'(-L)\geq0\;\hbox{ and }\;\omega'(L)\geq0.
$$
Let
$$H(t)=\tau g_{\varepsilon}(\omega)[\omega^2(1-J_\sigma*\omega)-d\omega],$$
then $|H(t)|\leq A^2$ for all $t\in[-L,L]$.
From
$$
(e^{-ct}\omega')'=-e^{-ct}H(t),
$$
we obtain that for $-L\leq t_1\leq t_2\leq L$,
\begin{align}
\omega'(t_1)e^{c(t_2-t_1)}+\frac{A^2}{c}(1-e^{c(t_2-t_1)})\leq\omega'(t_2)\leq \omega'(t_1)e^{c(t_2-t_1)}
-\frac{A^2}{c}(1-e^{c(t_2-t_1)})\label{03}
\end{align}
and
\begin{align}
\omega'(t_2)e^{c(t_1-t_2)}+\frac{A^2}{c}(e^{c(t_1-t_2)}-1)\leq\omega'(t_1)\leq \omega'(t_2)e^{c(t_1-t_2)}
-\frac{A^2}{c}(e^{c(t_1-t_2)}-1).\label{04}
\end{align}

We claim that
\begin{eqnarray}
\label{c>0} 0\leq\omega'(-L)\leq \frac1c A^2, &&\mbox{ for } c>0,\\
\label{c<0} 0\leq\omega'(L)\leq \frac1c A^2, &&\mbox{ for } c<0.
\end{eqnarray}
We first prove \eqref{c>0}.
Assuming the contrary, from \eqref{03}, by choosing $t_1=-L$,
we have
$$
\omega'(t_2)\geq\frac1c A^2+(\omega'(-L)-\frac1c A^2)e^{c(t_2-t_1)})\geq\frac1c A^2, \quad \mbox{ for all }t_2\in[-L,L].
$$
This cannot hold for a bounded function $0\leq\omega(t)\leq A$ and $\omega(L)=A$ for $L\geq L_0=C/(2A)$.
Similarly we can verify \eqref{c<0}.

\vskip5mm
Next we prove the boundedness of $\omega'(t)$ on $[-L,L]$ uniformly in $\tau$,
$L$, $\varepsilon$ and $\sigma$.

For $c>0$, with the change of variables
$$\omega(t)=e^{x(t)}-1,$$ we have
$$x'(t)=\frac{\omega'(t)}{\omega(t)+1}.$$
Then \eqref{1.3} is transformed into
$$x''-cx'+(x')^2+\tau g_{\varepsilon}(\omega)\frac{\omega}{\omega+1}\left[\omega
(1-J_\sigma*\omega)-d\right]=0.$$
Denote $y(t)=x'(t)$, we obtain
\begin{align}
y'-cy+y^2+f(t)=0,
\label{20}
\end{align}
where
 $$
 f(t)=\tau g_{\varepsilon}(\omega)\frac{\omega}{\omega+1}\left[\omega(1-J_\sigma*\omega)-d\right].
 $$
We have that $|f(t)|\leq A$, which is a direct consequence of $0\leq \omega(t)\leq A$.
$\omega\in C^2[-L,L]$ shows that $y(t)\in C^1[-L,L]$.
Let
$$
\beta=\min_{t\in[-L,L]}y(t),\quad\gamma=\max_{t\in[-L,L]}y(t).
$$
Next we will give a lower bound for $\beta$ and an upper bound for $\gamma$ uniformly in $\tau$, $\varepsilon$,
$L$ and $\sigma$.
Denote
$$\lambda_1(t)=\frac{c-\sqrt{c^2-4f(t)}}{2},\quad\lambda_2(t)=\frac{c+\sqrt{c^2-4f(t)}}{2},$$
which are the roots of $y^2-cy+f(t)=0$.
Suppose that $y(t)$ achieves its minimum at $t_1$, i.e.,
$$
\beta=\min_{t\in[-L,L]}y(t)=y(t_1).
$$
If $t_1=-L$, then
$$\beta=y(-L)=x'(-L)=\omega'(-L)\geq0.$$
If $t_1=L$, then
$$\beta=y(L)=x'(L)=\frac{\omega'(L)}{A+1}\geq0.$$
While if $t_1\in(-L,L)$, then $y'(t_1)=0$. From \eqref{20},
we obtain
$$y^2(t_1)-cy(t_1)+f(t_1)=0,$$
thus
$$\beta=y(t_1)\in\{\lambda_1(t_1),\lambda_2(t_1)\}\geq\frac{c-\sqrt{c^2+4A}}{2}.$$
On the other hand, suppose that $y(t)$ achieves its maximum at $t_2$, i.e.,
$$\gamma=\max_{t\in[-L,L]}y(t)=y(t_2).$$
If $t_2=-L$, then
$$\gamma=y(-L)=x'(-L)=\omega'(-L)\leq\frac{A^2}{c}.$$
If $t_2=L$, then $y'(L)\geq0$, from \eqref{20},
we obtain
$$y^2(L)-cy(L)+f(L)\leq 0,$$
thus
$$\gamma=y(L)\in(\lambda_1(L),\lambda_2(L))\leq\frac{c+\sqrt{c^2+4A}}{2}.$$
If $t_2\in(-L,L)$, then $y'(t_2)=0$. From \eqref{20},
we obtain
$$y^2(t_2)-cy(t_2)+f(t_2)=0,$$
then
$$\gamma=y(t_2)\in\{\lambda_1(t_2),\lambda_2(t_2)\}\leq\frac{c+\sqrt{c^2+4A}}{2}.$$
From the above discussion, we obtain that
$$\beta\geq\frac{c-\sqrt{c^2+4A}}{2}$$
and
$$\gamma\leq\max\{\frac{A^2}{c},\frac{c+\sqrt{c^2+4A}}{2}\}.$$
Furthermore, noticing $\omega(t)=e^{x(t)}-1$ and $\omega'(t)=(\omega(t)+1) y(t)$,
the uniform boundedness of $\omega'$ can be obtained.

For $c<0$, with the change of variables $\omega(t)=e^{-x(t)}-1$,
since
$$\frac{A^2}{c(A+1)}\leq x'(L)=-\frac{\omega'(L)}{A+1}\leq0$$
and
$$x'(-L)=-\omega'(-L)\leq0,$$
by similar analysis, the uniform boundedness of $\omega'$ achieves.

Now we have proved that the bounds of $\omega$ and $\omega'$ are independent of $\tau$,
$L$, $\varepsilon$ and $\sigma$. Then from \eqref{1.3}, for $c\neq0$,
the uniform boundedness of $\omega''$ can be obtained. While for the case $c=0$,
the uniform boundedness of $\omega''$ follows directly from \eqref{1.3}.
Finally, for any $c\in\mathbb R$, there exists a constant $C$ independent of
 $\tau$, $L$, $\varepsilon$ and $\sigma$ such that $\|\omega\|_{C^2(-L,L)}\leq C$.
\end{proof}

The next lemma provides an a priori bound for the speed $c$.

\begin{lemma}\label{lem3.2}
There exists $L_0>0$, for any $L>L_0$, there exists $K(L)>0$ such that for all $\tau\in[0,1]$,
$\varepsilon\in(0,\frac16A)$, any solution $(c,\omega)$ of
\eqref{1.3}-\eqref{1.5} satisfies
$-K(L)\leq c\leq c_{max}=2\sqrt{A}$. Moreover, for $\tau=1$, there exists $C>0$ such that for all
$L\geq L_0$, $\varepsilon>0$ and $\sigma>0$, we have
 $c\geq c_{min}=-C\sigma$.
\end{lemma}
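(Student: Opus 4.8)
The plan is to establish the three assertions separately, in increasing order of difficulty, always using the uniform bound $\|\omega\|_{C^2(-L,L)}\le C$ from Lemma \ref{lem3.1}.

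\emph{The crude two-sided bound, giving $c\ge -K(L)$.} First I would integrate \eqref{1.3} once over $[-L,L]$. Using $\int_{-L}^L\omega''=\omega'(L)-\omega'(-L)$, $\int_{-L}^L\omega'=\omega(L)-\omega(-L)=A$ by \eqref{1.4}, and the pointwise bound $|H|\le A^2$ for $H(t)=\tau g_\varepsilon(\omega)[\omega^2(1-J_\sigma*\tilde\omega)-d\omega]$ (which follows from $0\le\omega\le A$ and $0\le J_\sigma*\tilde\omega\le A<1$), I obtain
\[
cA=\omega'(L)-\omega'(-L)+\int_{-L}^L H(t)\,dt .
\]
Hence $|c|A\le 2C+2LA^2$, i.e. $-K(L)\le c\le K(L)$ with $K(L)=(2C+2LA^2)/A$. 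This delivers the required (crude, $L$-dependent) lower bound $c\ge -K(L)$ for every $\tau\in[0,1]$.

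\emph{The sharp upper bound $c\le 2\sqrt A$.} Here I would exploit that the reaction is dominated linearly, $H(t)\le \omega^2\le A\omega$, so $\omega$ is a supersolution of the linear operator: $\omega''-c\omega'+A\omega\ge 0$ on $[-L,L]$. Suppose $c>2\sqrt A$ and let $\mu=(c-\sqrt{c^2-4A})/2\in(0,\sqrt A)$ solve $\mu^2-c\mu+A=0$. Writing $\omega=e^{\mu t}w$, the inequality becomes $(e^{-\nu t}w')'\ge 0$ with $\nu=\sqrt{c^2-4A}>0$, so $e^{-\nu t}w'$ is nondecreasing; since $w=e^{-\mu t}\omega\ge 0$ and $w(-L)=0$, a negative value of $w'$ would propagate to the left and force $w<0$, a contradiction, so $w$ is nondecreasing. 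Evaluating $w$ at $0$ and $L$ and using the normalization $\omega(0)=d_0$, $\omega(L)=A$ from \eqref{1.5}, \eqref{1.4} yields $d_0\le A e^{-\mu L}$, that is $\mu\le L^{-1}\ln(A/d_0)$. To convert this into a contradiction I need a preliminary $L$-independent bound on $c$: on $[-L,0]$ one has $\omega\le d_0<d$, whence the bracket $\omega^2(1-J_\sigma*\tilde\omega)-d\omega<0$ and $H\le 0$, so $(e^{-ct}\omega')'\ge 0$ and $\omega$ is convex; integrating $\omega'(t)\le e^{ct}\omega'(0)$ over $[-L,0]$ gives $\omega'(0)\ge cd_0/(1-e^{-cL})\ge cd_0$ for $c\ge 1$, and $\omega'(0)\le C$ then forces $c\le C_0:=\max\{1,C/d_0\}$. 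Choosing $L_0\ge \mu(C_0)^{-1}\ln(A/d_0)$ rules out every $c\in(2\sqrt A,C_0]$, and together with $c\le C_0$ this gives $c\le 2\sqrt A$ for all $L\ge L_0$.

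\emph{The refined lower bound $c\ge -C\sigma$ for $\tau=1$ (the main obstacle).} For this I would turn to the energy identity: multiplying \eqref{1.3} with $\tau=1$ by $\omega'$ and integrating over $[-L,L]$ gives
\[
c\int_{-L}^L(\omega')^2\,dt=\tfrac12\big[(\omega'(L))^2-(\omega'(-L))^2\big]+\int_{-L}^L g_\varepsilon(\omega)\big[\omega^2(1-J_\sigma*\tilde\omega)-d\omega\big]\omega'\,dt .
\]
Since the denominator is positive, the sign of $c$ is that of the right-hand side. The key point is that its leading part is the local bistable area: splitting off the nonlocal term, the local contribution $\int g_\varepsilon(\omega)g(\omega)\omega'\,dt$, with $g(s)=s^2(1-s)-ds$, tends as $\varepsilon\to 0$ to $\int_0^A g(s)\,ds=A^3(3A-2)/12$, which is \emph{strictly positive precisely because $d<2/9$} (equivalently $A>2/3$) --- exactly the condition that the corresponding local front has positive speed. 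The nonlocal part is an $O(\sigma)$ perturbation, since $|J_\sigma*\tilde\omega(t)-\omega(t)|\le\|\omega'\|_\infty\int J_\sigma(z)|z|\,dz=Cm_1\sigma$ by the first moment of $J$ and the uniform $C^1$ bound, so it is controlled by $C\sigma\int_{-L}^L|\omega'|\,dt$. I expect the genuine difficulty to lie in making all of this uniform in $L$: the boundary terms $\omega'(\pm L)^2$ must be absorbed (they are small --- e.g. $\omega'(-L)\le d_0/L$ by the convexity above, and $\omega'(L)$ is controlled through \eqref{c>0}--\eqref{c<0} and Lemma \ref{lem3.1}), and, most delicately, the total variation $\int_{-L}^L|\omega'|\,dt$ of the \emph{non-monotone} profile on $[0,L]$ must be bounded independently of $L$. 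I would extract the latter from the bistable phase-plane structure ($\omega$ is already increasing on $[-L,0]$, and the sign of the reaction at a critical point excludes interior minima taking values in $(a,A)$), so that the right-hand side is bounded below by $A^3(3A-2)/12-O(\varepsilon)-C\sigma$, which yields $c\ge -C\sigma$ with $C$ independent of $L$, $\varepsilon$ and $\sigma$.
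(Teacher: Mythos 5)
Your first two steps are sound: the integration of \eqref{1.3} over $[-L,L]$ gives the crude bound $|c|\le(2C+2LA^2)/A$ more directly than the paper (which instead compares $\omega$ with the explicit solution of $v''-cv'=dv$), and your upper bound $c\le 2\sqrt A$ is essentially the paper's comparison with an exponential profile for $\omega''-c\omega'+A\omega\ge 0$, executed via the substitution $\omega=e^{\mu t}w$ plus an auxiliary bound $c\le C_0$ (the paper avoids the auxiliary bound by using the fixed strict supersolution $Ae^{\sqrt A(t-L)}$ rather than the $c$-dependent root $\mu(c)$, which degenerates as $c\to\infty$). The third step, however, is where the lemma's real content lies, and there your proposal has genuine gaps. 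The paper does not use an energy identity at all: it assumes $c\le c_{\min}$, locates the first point $t_0$ where $\omega=\tfrac12$, shows via the derivative bounds \eqref{c>0}--\eqref{c<0} and their interior analogues that $\omega$ stays within $\varepsilon_0$ of $\tfrac12$ and is increasing on $[t_0,t_0+R]$ with $R=R_0\sigma$, deduces $J_\sigma*\omega\le\tfrac12+2\varepsilon_0$ there (using only $\int_{[-R_0,R_0]^c}J\le\varepsilon_0/A$, i.e. only $J\in L^1$), and then obtains a quantitative concavity $\omega''\le -c_1\varepsilon_0$ on that interval which is incompatible with the smallness of the oscillation of $\omega'$. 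No global quantities on $[0,L]$ are ever needed.

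Your energy route founders on exactly the terms you defer. First, the boundary term: in the identity $c\int(\omega')^2=\tfrac12(\omega'(L))^2-\tfrac12(\omega'(-L))^2+\int g_\varepsilon(\omega)g(\omega)\omega'+N$, the harmful term is $-\tfrac12(\omega'(-L))^2$, and your claimed bound $\omega'(-L)\le d_0/L$ is obtained from $(e^{-ct}\omega')'\ge0$ on $[-L,0]$ only when $c\ge0$; in the case you must exclude, $c<0$, the same computation yields only $\omega'(-L)\le d_0|c|/(1-e^{cL})\approx d_0|c|$, so $\tfrac12(\omega'(-L))^2\sim d_0^2c^2$ can overwhelm the fixed positive area $A^3(3A-2)/12$ for large $|c|$ (a priori $|c|$ is only bounded by $K(L)$ at this stage). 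Second, the uniform-in-$L$ bound on $\int_{-L}^L|\omega'|$: the exclusion of interior minima with values in $(a,A)$ by the sign of the reaction fails for the nonlocal equation, since at a critical point one only gets $\omega(1-J_\sigma*\tilde\omega)\le d$, and $J_\sigma*\tilde\omega$ need not be close to $\omega$ there; to repair this (and to bound the nonlocal remainder $N$ by $C\sigma\int|\omega'|$ at all) you invoke $|J_\sigma*\tilde\omega-\omega|\le Cm_1\sigma$, which requires the first moment $m_1<\infty$ --- a hypothesis that appears only in part (ii) of Theorem 1.2, not in Lemma \ref{lem3.2}, whose statement is for all $\sigma>0$ and all $J\in L^1$. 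Finally, note that if your inequality closed, it would give $c>0$ outright; the paper only obtains positivity of the speed for small $\sigma$ and for the limiting semi-wavefront, via a separate comparison with a local bistable problem, which is a strong indication that the two deferred steps are not routine.
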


\begin{proof}
Since $0\leq\omega\leq A\leq1$, the solution $\omega$ of \eqref{1.3} satisfies the inequality
\begin{eqnarray}\label{omegaA}
\omega''-c\omega'+A\omega\geq0.
\end{eqnarray}
We will prove $c\leq2\sqrt{A}$ for big enough $L$ by a contradiction argument. If $c>2\sqrt{A}$, let
$$h(t)=Ae^{\sqrt{A}(t-L)},$$
then
\begin{eqnarray}\label{eqnh}
ch'(t)>h''(t)+Ah(t).
\end{eqnarray}
Noticing
$$\omega(-L)=0<h(0),\quad \omega(L)=A=h(L),$$
by comparing the equations \eqref{omegaA} and \eqref{eqnh}, we have that $\omega(t)\leq h(t)$ in $(-L,L)$.
However,
$$d_0=\omega(0)\leq h(0)=Ae^{-L\sqrt{A}},$$
which is impossible
for
$$L>L_0=(\ln A-\ln d_0)/\sqrt{A}.$$
Hence, $c>2\sqrt{A}$ is impossible for $L$ sufficiently large.

Next we prove a lower bound for $c$ with given $L>0$. We consider a solution $(c, \omega)$ of \eqref{1.3}-\eqref{1.5}. It satisfies
$$\omega''-c\omega'\leq A^2\omega,$$
as well as $\omega(-L)=0$,  $\omega(L)=A$. If $v$ is the solution of $v''-cv'=dv$
 with $v(-L)=0$ and $v(L)=A$, then by comparison principle, we obtain $\omega\geq v$.
As $v$ can be computed explicitly and
$$v(0)=\frac{A}{e^{\lambda_+L}+e^{\lambda_-L}},\quad \lambda_{\pm}:=\frac{c\pm\sqrt{c^2+4d}}{2}.$$
We see that $v(0)\to1$ as $c\to-\infty$. It follows that, for any $L>0$, there exists $K(L)>0$
such that $c>-K(L)$ implies $v(0)>d_0$, which contradict with the fact that $\omega\leq v$ and $\omega(0)=d_0$.
Therefore, if $(c, \omega)$ is a solution of \eqref{1.3}-\eqref{1.5}, then $c\geq-K(L).$

In the end, we obtain a lower bound for the speed $c$ with $\tau=1$. Suppose that $c<-1$. We start by proving that
the derivative $\omega'$ is bounded by $-2A^2/c$ on an interval $[-L+K_0,L]$ with the constants $K_0$
independent of $L$. Choosing $t_1=-L$ in \eqref{04} and noticing that $\omega'(-L)\geq0$, we obtain
\begin{align}
\omega'(t_2)\geq \frac1c A^2\;\hbox{ for all }\; t_2\in[-L,L]\label{1.8}
\end{align}
and for some constant $K_0>-\frac{c}{A}$ independent of $L$, we have
\begin{align}
\omega'(t_2)\leq -\frac2c A^2\;\hbox{for any}\;t_2\in[-L+K_0,L].\label{1.6}
\end{align}
Otherwise $\omega'(t_1)\geq -\frac1c A^2e^{c(t_1-t_2)}$ for all $t_1\in[-L,t_2]$ which cannot hold for a bounded function with $0\leq\omega(t)\leq A$ and $\omega(-L)=0$ on interval $[-L,t_2]$ for big enough $L$.

For a fixed $\varepsilon_0=\frac{1-4d}{36},$ there exists
$R_0>0$ independent of $\sigma$ such that for $R=R_0\sigma$,
$$
A\int_{[-R,R]^c}J_\sigma(x)dx=A\int_{[-R_0,R_0]^c}J(x)dx\leq\varepsilon_0.
$$
We are going to prove that
$$
c\geq c_{min}=-\frac{2}{\varepsilon_0}A^2R_0\sigma.
$$
If this is not true, assume $c\leq c_{min}$.
Thanks to the
conditions $\omega(L)=A$ and $\omega(0)=d_0<d<\frac29$,
we can define $t_0>0$ as the smallest positive real such that $\omega(t_0)=\frac12$. From \eqref{1.6}we obtain
for $t\in[t_0-R,t_0+R]\cap[-L,L]$, we have
\begin{align}
d_0<\frac12-\varepsilon_0\leq\frac12+\frac{2A^2R}{c}\leq\omega(t)\leq \frac12-\frac{2A^2R}{c}\leq\frac12
+\varepsilon_0<A\label{25}
\end{align}
and $[t_0-R,t_0+R]\subset[0,L]$ as soon as $c\leq-\frac{2A^2R}{\varepsilon_0}=c_{min}$. Furthermore, we have
\begin{align}
J_\sigma*\omega(t)=&\int_{[-R,R]}J_\sigma(x)\omega(t-x)dx+\int_{[-R,R]^c}J_\sigma(x)\omega(t-x)dx\nonumber
\\
&\leq \frac12+\varepsilon_0+A\int_{[-R,R]^c}J_\sigma(x)dx\leq\frac12+2\varepsilon_0\label{24new}
\end{align}
as soon as $c\leq c_{min}$.

For  $c\leq c_{min}$,  $\omega$ is increasing on $(t_0,t_0+R)$. If not,
the definition of $t_0$ implies the existence of a local minimum
$\overline{t}\in(t_0,t_0+R)$. Noticing $d<\frac29$, $\omega'(\overline{t})=0$,
$\omega''(\overline{t})\geq0$, from \eqref{1.3}, we have $J_\sigma*\omega(\overline{t})\geq
1-d/\omega(\overline{t})$, which together with \eqref{25} and \eqref{24new} implies
$$\frac12+2\varepsilon_0\geq J_\sigma*\omega(\overline{t})\geq
1-d/\omega(\overline{t})\geq1-\frac{2d}{1-2\varepsilon_0}>\frac12+2\varepsilon_0,$$
 which is a contraction.

Therefore, for $c\leq c_{min}$, $t\in(t_0,t_0+R)$, we have $\omega'(t)\geq0$ and thus
\begin{align*}
\omega''&\leq \omega''-c\omega'=g_\varepsilon(\omega)[d\omega-\omega^2(1-J_\sigma*\omega)]
\\
&\leq\omega(d+\omega(2\varepsilon_0-\frac12))
\\
&\leq d-(\frac12-2\varepsilon_0)(\frac12-\varepsilon_0)
\\
&<d-\frac14+\frac32\varepsilon_0=-\frac{13}{2}\varepsilon_0.
\end{align*}
It follows that $\omega'(t_0)-\omega'(t_0+R)\geq\frac{13}{2}\varepsilon_0R$, which together with \eqref{1.8} and
\eqref{1.6} implies $c\geq-\frac{6A^2R}{13\varepsilon_0}>c_{min}$, which is a contraction.

Finally, it is proved that $c_{min}=-\frac{2}{\varepsilon_0}A^2R_0\sigma$ is an explicit
lower bound for $c$.
\end{proof}

Now we begin the homotopy argument. The a priori bounds obtained in Lemma \ref{lem3.1} and \ref{lem3.2}
allow us to use the
Leray-Schauder topological degree argument to prove existence of solutions to the problem \eqref{1.3}-\eqref{1.5}
with $\tau=1$ on the bounded domain $[-L,L]$.

\begin{proposition}\label{prop3.1}
There exist $K>0$ and $L_0$ such that, for all $L\geq L_0$,  $\varepsilon\in(0,A/6)$ and $\sigma>0$,
\eqref{1.3}-\eqref{1.5} with $\tau=1$ has a solution $(c, \omega)$, i.e., $(c, \omega)$ satisfies
\begin{align}
\left\{\begin{array}{clcc}
&\omega''-c\omega'+g_{\varepsilon}(\omega)[\omega^2(1-J_\sigma*\tilde{\omega})-d\omega]=0 \qquad \hbox{in}\; (-L,L),
\\
&\omega(-L)=0,\quad \omega(0)=d_0,\quad \omega(L)=A
\end{array}\right.\label{1.9}
\end{align}
with
$$
\|\omega\|_{C^2(-L,L)}\leq K, \quad -c_{min}\leq c\leq c_{max}.
$$
\end{proposition}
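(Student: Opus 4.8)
The plan is to prove existence for the $\tau=1$ problem \eqref{1.9} by a Leray--Schauder degree argument, using the homotopy parameter $\tau$ that was introduced precisely for this purpose. First I would recast \eqref{1.3}--\eqref{1.5} as a fixed-point problem on a suitable Banach space. The subtlety is that the speed $c$ is an unknown, so the unknown is really the pair $(c,\omega)$; the normalization \eqref{1.5}, namely $\max_{-L\le t\le 0}\omega(t)=d_0$ (equivalently $\omega(0)=d_0$ together with monotonicity on $[-L,0]$), plays the role of the extra scalar condition that pins down $c$. Concretely I would work in $X=\mathbb{R}\times C^1([-L,L])$ and define, for each $\tau$, a compact operator $\mathcal{T}_\tau$ whose fixed points are exactly the solutions of \eqref{1.3}--\eqref{1.5}: given $(c,\omega)$, one solves the linear boundary value problem
\begin{align*}
v''-cv'=-\tau g_\varepsilon(\omega)\bigl[\omega^2(1-J_\sigma*\tilde\omega)-d\omega\bigr],\qquad v(-L)=0,\ v(L)=A,
\end{align*}
which has a unique solution $v$ by the maximum principle, and updates the speed by a map that enforces the constraint $v(0)=d_0$. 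The inversion of $v''-cv'$ with Dirichlet data gains two derivatives, so by Arzel\`a--Ascoli the map $(c,\omega)\mapsto v$ is compact from $C^1$ into $C^1$; continuity in $(c,\omega,\tau)$ is routine since the right-hand side depends continuously on these data (the convolution $J_\sigma*\tilde\omega$ is continuous in $\omega$ in the $C^0$ topology).

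Next I would verify the two hypotheses needed to conclude that the degree is nonzero and homotopy-invariant. The a priori bounds are exactly Lemma~\ref{lem3.1} and Lemma~\ref{lem3.2}: for every $\tau\in[0,1]$, $L\ge L_0$, $\varepsilon\in(0,A/6)$ and $\sigma>0$, any solution $(c,\omega)$ satisfies $\|\omega\|_{C^2(-L,L)}\le C$ and $-K(L)\le c\le c_{max}=2\sqrt{A}$; for $\tau=1$ one has in addition $c\ge c_{min}=-C\sigma$. These bounds are uniform in $\tau$, so all fixed points of $\mathcal{T}_\tau$ lie in the interior of a fixed large ball $B\subset X$ (taking the radius to dominate $C$ in the $C^1$ norm and $K(L)$ in the $c$-component), and hence $\mathcal{T}_\tau$ has no fixed point on $\partial B$ for any $\tau$. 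By homotopy invariance of the Leray--Schauder degree, $\deg(I-\mathcal{T}_1,B,0)=\deg(I-\mathcal{T}_0,B,0)$.

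The crux is therefore to compute the degree of the reference problem at $\tau=0$ and show it is nonzero. At $\tau=0$ the reaction term vanishes and \eqref{1.3} becomes the linear equation $\omega''-c\omega'=0$ on $[-L,L]$ with $\omega(-L)=0$, $\omega(L)=A$; for each $c$ this is solved explicitly by $\omega_c(t)=A\,(e^{ct}-e^{-cL})/(e^{cL}-e^{-cL})$ (and the affine function when $c=0$), so the constraint $\omega_c(0)=d_0$ becomes a single scalar equation in $c$. I would show this equation has a unique root $c_0$ and that the corresponding linearization is nondegenerate, whence the degree of the reduced scalar map is $\pm1$ and $\deg(I-\mathcal{T}_0,B,0)\neq0$. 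Because $d_0\in(0,d)\subset(0,A)$ and $\omega_c(0)$ is a strictly monotone function of $c$ running over the full interval $(0,A)$ as $c$ ranges over $\mathbb{R}$, existence and uniqueness of $c_0$ are clear, and strict monotonicity gives the nondegeneracy. Consequently $\deg(I-\mathcal{T}_1,B,0)\neq0$, which yields a fixed point $(c,\omega)$ of $\mathcal{T}_1$, i.e.\ a solution of \eqref{1.9}; the stated bounds $\|\omega\|_{C^2(-L,L)}\le K$ and $-c_{min}\le c\le c_{max}$ then follow from Lemmas~\ref{lem3.1} and \ref{lem3.2} applied at $\tau=1$.

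I expect the main obstacle to be the \emph{bookkeeping of the scalar constraint within the degree framework} rather than any single estimate: one must set up the operator $\mathcal{T}_\tau$ so that the constraint \eqref{1.5} is encoded as part of the fixed-point equation (for instance by letting the $\mathbb{R}$-component of $\mathcal{T}_\tau$ adjust $c$ in the direction that corrects $v(0)-d_0$), verify compactness and continuity of this augmented operator uniformly in $\tau$, and ensure the a priori bounds of Lemmas~\ref{lem3.1}--\ref{lem3.2} keep \emph{all} $\tau$-solutions off $\partial B$ simultaneously. A secondary point of care is that the cut-off $g_\varepsilon$ and the spatial truncation $\tilde\omega$ must be handled so that the nonlinearity defines a genuinely compact perturbation; since $g_\varepsilon\in C_0^\infty$ and $J_\sigma*\tilde\omega$ smooths $\omega$, this causes no real difficulty, but it must be stated cleanly so that the degree is well defined.
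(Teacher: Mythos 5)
Your proposal is correct and follows essentially the same route as the paper: a Leray--Schauder degree argument on $X=\mathbb{R}\times C^1([-L,L])$ with the scalar constraint $v(0)=d_0$ folded into the $\mathbb{R}$-component of the fixed-point map, a priori bounds from Lemmas~\ref{lem3.1} and \ref{lem3.2} keeping all $\tau$-solutions off the boundary of a large ball, and the degree at $\tau=0$ computed to be $\pm1$ from the explicit solution of $\omega''-c\omega'=0$ and the strict monotonicity of $\omega_0^c(0)$ in $c$. The paper carries this out with the operator $K_\tau:(c,v)\mapsto(d_0-v(0)+c,\omega)$ and a final homotopy to the product map $\Phi(c,v)=(\omega_0^c(0)-d_0,\,v-\omega_0^{c_0})$, exactly as you anticipate.
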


\begin{proof}
 We introduce a map $K_\tau$ which is defined from the Banach space $X=\mathbb R\times C^{1}[-L,L]$, equipped with the norm
$\|(c,v)\|_{X}=\max\{|c|,\|v\|_{C^{1}[-L,L]}\}$, onto itself, i.e.,
$$
K_\tau:(c,v)\to(d_0-v(0)+c,\omega),
$$
where $\omega$ is the solution of the linear system
\begin{align}
P_\tau\left\{\begin{array}{clcc}
&\omega''-c\omega'+\tau g_{\varepsilon}(v)[v^2(1-J_\sigma*\tilde{v})-dv]=0 \qquad \hbox{in}\; (-L,L),
\\
&\omega(-L)=0,\quad \omega(L)=A.
\end{array}\right.
\end{align}
A solution $(c_\tau,\omega_\tau)$ of the finite interval problem \eqref{1.3}-\eqref{1.5} is a fixed point
 of $K_\tau$ and satisfies
$K_\tau(c_\tau,\omega_\tau)=(c_\tau,\omega_\tau)$ and vice versa.
Hence, in order to show that \eqref{1.9} has a wavefront, it suffices to show that the kernel
of the operator $Id-K_1$ is nontrivial.  The classical regularity
theory implies that the operator $K_\tau$ is compact and continuous in $\tau\in[0,1]$.
Let
$B_M=\{\|(c,v)\|_X\leq M\}$. Then Lemma \ref{lem3.1} and \ref{lem3.2} show that the operator $Id-K_\tau$ does not vanish on the boundary
$\partial B_M$ with $M$ sufficiently large for any $\tau\in[0,1]$. It remains only to show that
$deg(Id-K_1,B_M,0)\neq0$ in $\overline{B}_M$. The homotopy invariance property of the degree implies that
$deg(Id-K_1,B_M,0)=deg(Id-K_0,B_M,0)$. Moreover, for $\tau=0$, the operator $F_0=Id-K_0$ is given by
$$F_0(c,v)=(v(0)-d_0,v-\omega_0^c).$$
Here $\omega_0^c(t)$ solves
\begin{align*}
&(\omega_0^c)''-c(\omega_0^c)'=0,
\\
& \omega_0^c(-L)=0,\quad,\omega_0^c(L)=A
\end{align*}
and is given by
\begin{align*}
\omega_0^c(t)=\left\{\begin{array}{clcc}
&A\frac{e^{ct}-e^{-cL}}{e^{cL}-e^{-cL}},\quad &&c\neq0,
\\
&\frac{1}{2L}t+1/2A,\quad&&c=0.
\end{array}\right.
\end{align*}
In particular,  since $\omega_0^c(0)$ is decreasing in $c$, $\omega_0^0(0)=\frac A2>\frac d2$ and
$\lim_{c\to+\infty}\omega_0^c(0)=0$,
there exists a unique $c_0$ such
that $\omega_0^{c_0}(0)=d_0$. The mapping $F_0=Id-K_0$ is homotopic to
 $$\Phi(c,v)=(\omega_0^c(0)-d_0,v-\omega_0^{c_0}).$$
The degree of the mapping $\Phi$ is the product
of the degrees of each component. As $\omega_0^c(0)$ is decreasing in $c$, $deg(\omega_0^c(0)-d_0,B_M,0)=-1$.
While $deg(v-\omega_0^{c_0},B_M,0)=1$.
Thus
$$
deg(Id-K_1,B_M,0)=deg(Id-K_0,B_M,0)=-1,
$$
and thereafter a solution  $(\omega,c)\in B_M$ of $P_1$ exists.
\end{proof}

The following lemma is used as a preparation in passing to the limit $L\rightarrow \infty$ and $\varepsilon\rightarrow 0$.

\begin{lemma}\label{lem3.3}
For any solution $(c, \omega)$ of \eqref{1.10} with $\omega\in C^2(\mathbb R)$ and
\begin{align}
|c|>\sqrt{m_2}\sigma A^2,
\end{align}
where $m_i=
\int_\mathbb R|z|^iJ(z)dz$, $i=1,2$, it holds that
$\lim_{t\to+\infty}\omega\in\{0,a,A\}$.
\end{lemma}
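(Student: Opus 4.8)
The plan is to reduce the statement to two facts: that $\omega'(t)\to 0$ as $t\to+\infty$, and that then every subsequential limit of $\omega$ at $+\infty$ is a constant equilibrium. The bridge is an energy identity adapted to the nonlocal term. Writing the reaction as $\omega^2(1-J_\sigma*\omega)-d\omega=-V'(\omega)-\omega^2(J_\sigma*\omega-\omega)$ with $V(s)=\tfrac d2 s^2-\tfrac13 s^3+\tfrac14 s^4$ (so that $-V'(s)=s^2-s^3-ds$ is the local part of the reaction), and multiplying \eqref{1.10} by $\omega'$, one obtains for $E(t):=\tfrac12(\omega'(t))^2-V(\omega(t))$ the identity
\begin{align*}
E'(t)=c\,(\omega'(t))^2+\omega^2(t)\big(J_\sigma*\omega(t)-\omega(t)\big)\,\omega'(t).
\end{align*}
Here $E$ is bounded: $0\le\omega\le A$, and from $\omega''-c\omega'=d\omega-\omega^2(1-J_\sigma*\omega)$ interior estimates give $\omega',\omega''\in L^\infty(\mathbb R)$ for the fixed solution.

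First I would record that $\omega'\in L^2(-\infty,0)$: since $\omega'\ge0$ on $(-\infty,0]$ with $\int_{-\infty}^0\omega'=\omega(0)-\omega(-\infty)=d_0$, one has $\int_{-\infty}^0(\omega')^2\le\|\omega'\|_\infty d_0$. The core step is to upgrade this to $\omega'\in L^2(\mathbb R)$. For this I would multiply \eqref{1.10} by $\zeta_R^2\omega'$, where $\zeta_R$ equals $1$ on $[0,R]$, is supported in $[-1,R+1]$, and has bounded derivatives, and integrate. The local terms yield $|c|\int\zeta_R^2(\omega')^2$ together with commutator terms involving $\zeta_R'$ that are $O(1)$ uniformly in $R$, while the nonlocal remainder is controlled by Cauchy--Schwarz as
\begin{align*}
\Big|\int\zeta_R^2\,\omega^2(J_\sigma*\omega-\omega)\,\omega'\Big|\le A^2\,\big\|J_\sigma*\omega-\omega\big\|_{L^2}\Big(\int\zeta_R^2(\omega')^2\Big)^{1/2}.
\end{align*}
Using the representation $J_\sigma*\omega-\omega=-\int J_\sigma(s)\int_0^s\omega'(\cdot-r)\,dr\,ds$ and Minkowski's inequality bounds $\|J_\sigma*\omega-\omega\|_{L^2}$ on the relevant window by $\sigma m_1\|\omega'\|_{L^2}$, and since $m_1\le\sqrt{m_2}$ by Cauchy--Schwarz, the hypothesis $|c|>\sqrt{m_2}\,\sigma A^2$ gives $|c|>\sigma m_1 A^2$. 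Writing $P_R=\int\zeta_R^2(\omega')^2$ and feeding in the $L^2(-\infty,0)$ bound, the estimate takes the form $(|c|-\sigma m_1A^2)P_R\le C_1+C_2P_R^{1/2}$ with $C_1,C_2$ independent of $R$; hence $P_R$ is bounded uniformly in $R$ and $\omega'\in L^2(\mathbb R)$.

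This absorption is the main obstacle. Because $J$ need not be compactly supported, the convolution $J_\sigma*\omega$ samples $\omega'$ over the whole line, so that $\|J_\sigma*\omega-\omega\|_{L^2}$ is controlled only by the global quantity $\|\omega'\|_{L^2}$ one is trying to bound; a naive estimate is therefore circular. The role of the cutoff $\zeta_R$, together with the a priori bound on $(-\infty,0)$, is precisely to break this circularity while keeping all constants independent of $R$; the tail contributions produced by the unbounded support are lower order, finite thanks to $m_2<\infty$ and $\|\omega'\|_\infty<\infty$, and do not affect the leading coefficient $|c|-\sigma m_1A^2>0$. Once $\omega'\in L^2(\mathbb R)$ is known, $(\omega')^2$ is integrable with bounded derivative $2\omega'\omega''$, hence uniformly continuous, so a Barbalat-type argument forces $\omega'(t)\to0$ as $t\to+\infty$.

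Finally I would conclude as in Lemma~\ref{lem2.3}. Given any subsequential limit $\ell$ of $\omega$ at $+\infty$, choose $t_n\to+\infty$ with $\omega(t_n)\to\ell$; the translates $v_n=\omega(\cdot+t_n)$ are bounded in $W^{2,p}_{loc}(\mathbb R)$, so along a subsequence $v_n\to v$ in $C^{1}_{loc}(\mathbb R)$, where $v$ solves the same equation. Since $\omega'(t)\to0$ we have $v'\equiv0$, so $v\equiv\ell$, and substituting into the equation gives $\ell^2(1-\ell)-d\ell=0$, i.e.\ $\ell\in\{0,a,A\}$. Thus the $\omega$-limit set $\bigcap_{T}\overline{\omega([T,\infty))}$, being a nested intersection of compact connected sets, is connected and contained in the discrete set $\{0,a,A\}$, hence is a single point; therefore $\lim_{t\to+\infty}\omega(t)$ exists and lies in $\{0,a,A\}$. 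The case $c<0$ is identical, the sign of $c$ entering only through $|c|$ in the absorption step.
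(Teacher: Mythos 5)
Your argument is essentially the paper's own proof: both multiply the equation by $\omega'$, integrate over a large window, estimate the nonlocal remainder $\int\omega'\omega^2(J_\sigma*\omega-\omega)$ by Cauchy--Schwarz combined with the moment bound $\|J_\sigma*\omega-\omega\|_{L^2}\lesssim \sigma\|\omega'\|_{L^2}$ (the paper works with $\sqrt{m_2}$ where you use $m_1\le\sqrt{m_2}$, and it handles the window mismatch by a shifted-interval estimate using $\|\omega'\|_\infty$ and $m_1$ rather than your smooth cutoff and the $L^2(-\infty,0)$ bound), absorb to conclude $\omega'\in L^2(\mathbb R)$ under $|c|>\sqrt{m_2}\sigma A^2$, and finish via $\omega'\to0$ and the translation argument of Lemma~\ref{lem2.3}. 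Your closing remark that one needs connectedness of the $\omega$-limit set to upgrade ``every subsequential limit lies in $\{0,a,A\}$'' to ``the limit exists'' is a correct tightening of a step the paper leaves implicit.
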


\begin{proof}
 Rewrite the first equation of \eqref{1.10} as
$$\omega''-c\omega'+\omega^2(1-\omega)+\omega^2(\omega-J_\sigma*\omega)-d\omega=0,$$
then multiply it by $\omega'$ and integrate from $-R$ to $R$ for arbitrary $R$, we get
\begin{align}
c\int_{-R}^{R}|\omega'|^2dt=[\frac12(\omega')^2+\frac{1}{3}\omega^3-\frac{1}{4}\omega^4-\frac d2\omega^2]|_{-R}^{R}
+\int_{-R}^R\omega'\omega^2(\omega-J_\sigma*\omega)dt.\label{1}
\end{align}
Denote the last term by $I$, Cauchy's inequality implies
\begin{align}
I^2\leq \int_{-R}^{R}(\omega'\omega^2)^2dt\int_{-R}^{R}(\omega-J_\sigma*\omega)^2dt\leq
 A^4\int_{-R}^{R}(\omega')^2dt
\int_{-R}^{R}(\omega-J_\sigma*\omega)^2dt.\label{2}
\end{align}
Noticing
$$\omega(t)-J_\sigma*\omega(t)=\int_{\mathbb R}J_\sigma(t-y)(\omega(t)-\omega(y))dy=
\int_{\mathbb R}\int_0^1J_\sigma(-z)
\omega'(t+\theta z))(-z)d\theta dz,$$
again by Cauchy's inequality we obtain
$$(\omega(t)-J_\sigma*\omega(t))^2\leq m_2\sigma^2\int_\mathbb R\int_0^1J_\sigma(-z)\omega'^2(t+\theta z))d\theta dz.$$
Integrating the above inequality, we have
\begin{align}
\int_{-R}^{R}(\omega(t)-J_\sigma*\omega(t))^2dt&\leq m_2\sigma^2\int_0^1\int_{\mathbb R}J_\sigma(-z)
\int_{-R+\theta z}^{R+\theta z}
\omega'^2(t)dtdzd\theta\nonumber
\\
&\leq m_2\sigma^2\int_{-R}^{R}\omega'(t)^2dt+m_2\sigma^2\int_0^1\int_{\mathbb R}J_\sigma(-z)2\theta|z|C^2dzd\theta.\label{3}
\\
&\leq  m_2\sigma^2\int_{-R}^{R}\omega'(t)^2dt+m_2m_1\sigma^3 C^2.\nonumber
\end{align}
A combination of \eqref{1}, \eqref{2} and \eqref{3} gives us
\begin{align}
|c|\int_{-R}^{R}\omega'(t)^2dt&\leq\left[\frac12(\omega')^2+\frac{1}{3}\omega^3-\frac{1}{4}\omega^4
-\frac d2\omega^2\right]\big|_{-R}^{R}\nonumber
\\
&\quad+A^2\sqrt{\int_{-R}^{R}\omega'(t)^2dt\left(m_2\sigma^2\int_{-R}^{R}\omega'(t)^2dt
+m_1m_2\sigma^3C^2\right)}\nonumber
\\
&\leq C+A^2\sqrt{m_2}\sigma\left(\int_{-R}^{R}\omega'(t)^2dt+m_1\sigma C^2\right).
\end{align}
If $|c|>A^2\sqrt{m_2}\sigma$, then
$\omega'\in L^2(\mathbb R)$. Furthermore, $\omega\in C^2(\mathbb R)$ implies that $\lim_{t\to\pm\infty}\omega'=0$, thus $\lim_{t\to+\infty}\omega$ exists.
Then from Lemma \ref{lem2.3}, we have $\lim_{t\to+\infty}\omega\in\{0,a,A\}$.
\end{proof}

\begin{proof}[Proof of Theorem 1.2]
From Proposition \ref{lem3.1}, for each $L\gg 1$ and $0<\varepsilon\ll 1$,
the problem \eqref{1.9}
does have at least one solution $(c_{L,\varepsilon},\omega_{L,\varepsilon})$. Next we will show that, for
$\varepsilon\to0$ and then
$L$ going to $+\infty$, the sequence $(c_{L,\varepsilon},\omega_{L,\varepsilon})$(or an extracted subsequence)
converges to a solution of \eqref{1.10}.

\begin{enumerate}
\item[(i)] Having constructed a solution $(c_{L,\varepsilon},\omega_{L,\varepsilon})$ of
\eqref{1.9}
with
$$-c_{min}\leq c_{L,\varepsilon}\leq c_{max},\quad \|\omega_{L,\varepsilon}\|_{C^2(-L,L)}\leq K$$
and noticing that $K$, $c_{min}$ and $c_{max}$
are uniform in $L\geq L_0$ and $\varepsilon\in(0,A/6)$. We can take the limit $\varepsilon\to0$
and $L\to+\infty$ in the approximating problem, and show that the limit $(\omega,c)$ is the wavefront that connects
$0$ and $A$. Namely, with fixed $L$, for $\varepsilon\to0$, there exists a subsequence of
$c_{L,\varepsilon}$ and $\omega_{L,\varepsilon}$, denoted by itself, such that
$c_{L,\varepsilon}\to c_L$ and $\omega_{L,\varepsilon}\to\omega_L$ in $C^1_{loc}(\mathbb R)$.
Then
$$-c_{min}\leq c_L\leq c_{max},\quad\|\omega_L\|_{C^2(-L,L)}\leq K.$$
Moreover,
from the definition of $g_\varepsilon$, we have $g_{\varepsilon}\to 1$
 in $\mathbb R$ as $\varepsilon\rightarrow 0$. Then $(c_L,\omega_L)$ is the solution of
\begin{align*}
\left\{\begin{array}{clcc}
\omega''-c\omega'+\omega^2(1-J_\sigma*\tilde{\omega})-d\omega=0 \qquad \hbox{in}\; (-L,L),
\\
\omega(-L)=0,\quad \omega(0)=d_0,\quad \omega(L)=A.
\end{array}\right.
\end{align*}

Again, there exists a subsequence $L_n\to\infty$, such that
$c_{L_n}\to c^*(\sigma)$ and $\omega_{L_n}\to \omega$ in $C^1_{loc}(\mathbb R)$,
and
$$
-c_{min}\leq c^*(\sigma)\leq c_{max},\quad \|\omega\|_{C^2(\mathbb R)}\leq M,
$$
together with
$$
0\leq\omega\leq A, \mbox{ in } \mathbb R, \mbox{ and } 0\leq\omega\leq d_0 \mbox{ in } (-\infty,0].
$$
Furthermore, the limit $(c^*(\sigma),\omega)$ is a solution of $\eqref{1.10}$, with $\omega'(t)\geq0$
for $t\in(-\infty,0]$. Hence, as $t\to-\infty$,
$\omega(t)\to \alpha_0$ and $0\leq \alpha_0\leq d_0$. By Lemma \ref{lem2.3}, we have $\alpha_0=0$.

\item[(ii)] In order to show that $\omega(+\infty)=A$, we have to start from the approximation $\omega_L(t)$
 in $[-L,L]$ with $\omega_L(L)=A$ and then take the limit $L\to +\infty$. In other words, we need to prove $\lim_{L\to+\infty}\omega_L(L)=A$.
The uniform bound of $|\omega_L'|_{\infty}\leq C$ provides that
\begin{align*}
|\omega_L-J_\sigma*\omega_L|(t)\leq&\int_{\mathbb R}J_\sigma(s)|\omega_L(t)-\omega_L(t-s)|ds
\\
\leq&\|\omega_L'\|_\infty \int_{\mathbb R} sJ_\sigma(s)ds\
\\
\leq&Cm_1\sigma.
\end{align*}
Thus
\begin{align*}
\omega_L''-c_L\omega_L'+\omega_L^2(1-\omega_L)-d\omega_L=\omega^2(J_\sigma*\omega_L-\omega_L)
\leq A^2Cm_1\sigma.
\end{align*}
Denote $f(s)=s^2(1-s)-ds$, the equation can be rewritten as
$$
\omega_L''-c_L\omega_L'+f(\omega_L)- C_0\sigma\leq 0,
$$
where $C_0=A^2Cm_1$. Let $\alpha<\gamma<\beta$ be the three solutions of $f(s)- C_0\sigma=0$.  There exists
$\sigma_0>0$ such that that for $\sigma<\sigma_0$, it holds $\alpha<0<a<\gamma<\beta<A$.
Let $\psi_L$ be the solution of
\begin{align}
&\psi_L''-c_L\psi_L'+ f(\psi_L)- C_0\sigma=0\;\hbox{ in }[-L,L],\label{1.11}
\\
&\psi_L(-L)=\alpha,\psi_L(L)=\beta.\label{1.12}
\end{align}
By maximum principle, we have that $\alpha\leq\psi_L\leq\beta$.
By comparison principle as in \cite{16}, we get that $\psi_L(t)\leq \omega_L(t)$ for $t\in[-L,L]$.
Then by the classical theory of elliptic equations, there exists a subsequence
${L_n}$, denoted by itself, such that as $n\to\infty$, $L_n\to\infty$,
$c_{L_n}\to c^*(\sigma)$ and $\psi_{L_n}\to\psi$ in $C^1_{loc}(\mathbb R)$.
and the limit satisfies the following local problem
\begin{align}
\psi''-c^*(\sigma)\psi'+ f(\psi)- C_0\sigma=0\;\hbox{ in }\;\mathbb R.\label{1.13}
\end{align}
It can be easily verified that $\psi\leq\omega$ in $\mathbb R$.

Now we claim that there exists a constant $C$ such that
$|\psi'(t)|\leq C$, for any $t\in\mathbb R$. This can be proved in the following. We reformulate \eqref{1.13} into its integral version
$$
\psi-\alpha=\frac{1}{z_2-z_1}\left(\int_{-\infty}^t e^{z_1(t-s)}r(\psi)(s)ds+\int_t^{+\infty}
e^{z_2(t-s)}r(\psi)(s)ds\right),
$$
where
$$
r(\psi)=\psi^2(1-\psi)-d\psi+b(\psi-\alpha)+C_0\sigma>0, \mbox{ for } b\gg 1,
$$
and
$$
z_1=\frac{c^*(\sigma)-\sqrt{(c^*(\sigma))^2+4b}}{2}<0,\quad z_2=\frac{c^*(\sigma)+\sqrt{(c^*(\sigma))^2+4b}}{2}>0.
$$
The first order derivative of $\psi$ is
$$
\psi'=\frac{1}{z_2-z_1}\left(z_1\int_{-\infty}^t e^{z_1(t-s)}r(\psi)(s)ds+z_2\int_t^{+\infty}
e^{z_2(t-s)}r(\psi)(s)ds\right).
$$
Thus we have
$$\psi'-z_1(\psi-\alpha)=\int_t^{+\infty}e^{z_2(t-s)}r(\psi)(s)ds\geq0$$
and
$$\psi'-z_2(\psi-\alpha)=-\int_{-\infty}^t e^{z_1(t-s)}r(\psi)(s)ds\leq0,$$
which imply that
$$z_1(\psi-\alpha)\leq\psi'\leq z_2(\psi-\alpha).$$
Then the boundedness of $\psi'$ follows from the boundedness of $\psi$. For any $R>0$,
multipling \eqref{1.13} by $\psi$
and integrating from $-R$ to $R$, we get
\begin{align}
c^*(\sigma)\int_{-R}^R|\psi'|^2dt=[\frac12(\psi')^2+\frac13\psi^3-\frac14\psi^4
-\frac{d}{2}\psi^2-C_0\sigma\psi]
 \big|_{-R}^R\leq C.\label{22}
 \end{align}
Next, we need to prove that $c^*(\sigma)$ is strictly positive in order to show that $\psi'$  is bounded in $L^2$.

Noticing that
$$
\beta^3=\beta^2-d\beta-C_0\sigma,\quad\alpha^3=\alpha^2-d\alpha-C_0\sigma,
$$
we have that
$$
\alpha\to0, \beta\to A \mbox{ as } \sigma\to0.
$$
Together with the fact that,
$$
\left(\frac13-\frac14A^2-\frac12 d\right)A-\frac13 d>0, \quad\mbox{ for } d<\frac29,
$$
it is easy to verify that there exists $\sigma_1>0$ such that for $\sigma<\sigma_1$ and big enough $L$,
\begin{eqnarray}
\nonumber &&c_L\int_{-L}^{L}|\psi_L'|^2dt\\
&=&\Big[\frac12(\psi_L')^2+\frac13\psi_L^3-\frac14\psi_L^4
-\frac{d}{2}\psi_L^2-C\sigma\psi_L\Big]\Big|_{-R}^R\nonumber
 \\
 &\geq&\Big[\frac13\psi_L^3-\frac14\psi_L^4-\frac{d}{2}\psi_L^2-C\sigma\psi_L\Big]\Big|_{-R}^R\label{23}
\\
&=&\frac13(\beta^3-\alpha^3)-\frac14(\beta^4-\alpha^4)-\frac{d}{2}(\beta^2-\alpha^2)-C\sigma(\beta-\alpha)\nonumber
\\
&=&\frac13(\beta^2-\alpha^2-d(\beta-\alpha))-(\frac14(\beta^2+\alpha^2)+\frac d2)(\beta^2-\alpha^2)-C\sigma(\beta-\alpha)\nonumber
\\
&=&\left(\left[\frac13-\frac14(\beta^2+\alpha^2)-\frac12 d\right](\beta+\alpha)-\frac13 d-C\sigma\right)(\beta-\alpha)>0.\nonumber
\end{eqnarray}
From \eqref{22} and \eqref{23}, we obtain
$$0<c^*(\sigma)\int_{\mathbb R}|\psi'|^2dt\leq C.$$
Thus
$$c^*(\sigma)>0, \quad 0<\int_{\mathbb R}|\psi'|^2dt<\frac{C}{c^*(\sigma)},$$
which implies that
$\psi'\in L^2$.
By the same arguments as that in Lemma \ref{lem2.3}, we obtain $\lim_{t\to\pm\infty}\psi(t)$ exists and belong to $\{\alpha,\gamma,\beta\}$.
Furthermore, noticing $\psi\leq\omega$ and $\omega(-\infty)=0$, we have $\psi(-\infty)=\alpha$. Now we claim that
$\psi(+\infty)>\alpha$. If $\psi(+\infty)=\alpha$, noticing that $c^*(\sigma)>0$, then from the theory of travelling waves for local problem,
there must holds $\psi\equiv\alpha$, which contradict with the fact that  $\int_{\mathbb R}|\psi'|^2dt>0$.
Thus $\psi(+\infty)>\alpha$,
which means that either $\psi(+\infty)=\beta$ or $\psi(+\infty)=\gamma$. Again from the theory of travelling waves for local
problem,
we obtain there exists $c_0$ independent of $\sigma$ such that $c^*(\sigma)>c_0>0$.
Then Lemma \ref{lem3.3} implies that for
$$\sigma<\sigma^*=\{\sigma_0,\sigma_1, \frac{c_0}{\sqrt{m_2}A^2}\},$$  $\lim_{t\to+\infty} \omega$
exists and belongs to $\{0,a,A\}$.
Noticing that $\beta>\gamma>a$, we have
$$
\omega(+\infty)\geq \psi(+\infty)>a.
$$
Therefore,
$\omega(+\infty)=A$.

\item[(iii)] In the end, as a byproduct, we can also take the limit $\sigma\to0$. Let
$\omega_\sigma$ be the solution of \eqref{1.10} with
$\omega_\sigma(+\infty)=A$. Noticing that
$$\|\omega_\sigma\|_{C^2(\mathbb R)}\leq K,\quad -c_{min}\leq c^*(\sigma)\leq c_{max}$$ with $K$,
$c_{min}$ and $c_{max}$ independent of $\sigma$ for $\sigma<\sigma_0$, we get a subsequence of
$(c^*(\sigma),\omega_\sigma)$, denoted by itself, such that
 $c^*(\sigma)\to c^*$ and $\omega_\sigma\to \omega_0$ locally uniformly in $C^{1,\alpha}(\mathbb R)$ as
 $\sigma\to0$,
 where $(c^*,\omega_0)$ is the solution of \eqref{24}, i.e.,
\begin{align*}
	 & \omega_0''-c^*\omega_0'+\omega_0^2(1-\omega_0)-d\omega_0=0 \qquad
	 \hbox{in}\; \mathbb R\\
	 & \omega_0(-\infty)=0,\quad \omega_0(0)=d_0,\quad \omega_0(+\infty)=A.
\end{align*}

\end{enumerate}
\end{proof}

\end{document}